\documentclass[10.5pt, a4paper]{amsart}
\usepackage{amssymb, amsmath, amscd, bm, amsthm, mathtools, pdfpages, setspace, subcaption, graphicx, xfrac, empheq, enumitem, array, arydshln, hyperref}

\usepackage[
backend=biber,
style=numeric-comp,
sortcites=true,
maxnames=99
]{biblatex}

\DeclareFieldFormat{pagetotal}{#1 pp}
\renewbibmacro*{note+pages}{
	\printfield{note}
	\setunit{\bibpagespunct}
	\printfield{pages}
	\setunit{\addcomma\space}
	\printfield{pagetotal}
	\clearfield{pagetotal}
}
\renewbibmacro*{chapter+pages}{
	\printfield{chapter}
	\setunit{\bibpagespunct}
	\printfield{pages}
	\setunit{\addcomma\space}
	\printfield{pagetotal}
	\clearfield{pagetotal}
}
\def\@Rref#1{\hbox{\rm \ref{#1}}}
\def\Rref#1{\@Rref{#1}}
\theoremstyle{plain}
\newtheorem{theorem}{Theorem}[section]
\newtheorem{proposition}[theorem]{Proposition}

\newtheorem{assumption}[theorem]{Assumption}
\newtheorem{lemma}[theorem]{Lemma}

\theoremstyle{definition}
\newtheorem{definition}{Definition}[section]

\newtheorem{remark}[definition]{Remark}

\newcommand{\Ls}{\mathrm{L}^{2}}
\newcommand{\Ht}{\mathcal{H}_{\tau}}
\newcommand{\Hs}{\mathrm{H}^{L}_{0}}
\newcommand{\Hso}{\mathrm{H}^{L}}
\newcommand{\Cinf}{\mathrm{C}^{\infty}}
\newcommand{\Ps}{\mathrm{P}_{L-1}}
\newcommand{\Bs}{\mathrm{B}^{L}_N}
\newcommand{\eB}{\mathrel{\overset{\mathcal{B}}{=}}}
\newcommand{\geB}{\mathrel{\overset{\mathcal{B}}{\geq}}}
\newcommand{\gB}{\mathrel{\overset{\mathcal{B}}{>}}}
\newcommand{\ggB}{\mathrel{\overset{\mathcal{B}}{\gg}}}
\newcommand{\Rsym}{\mathbb{R}^{q\times q}_{\mathrm{sym}}[\zeta,\eta]}
\DeclareMathOperator{\Ran}{Ran}
\DeclareMathOperator{\Ker}{Ker}
\DeclareMathOperator{\mathspan}{span}

\begin{document}

	\title[Data-Driven Stabilization]{Data-Driven Stabilization 
		of Continuous-Time Systems \\
		with Noisy Input-Output Data}
	
	\thispagestyle{plain}
	
	\author{Masashi Wakaiki}
	\address{Graduate School of System Informatics, Kobe University, Nada, Kobe, Hyogo 657-8501, Japan}
	\email{wakaiki@ruby.kobe-u.ac.jp}
	\thanks{This work was supported in part by 
		JSPS KAKENHI Grant Number 24K06866.}
	
	\begin{abstract}
		We study
		data-driven stabilization of continuous-time 
		systems in autoregressive form when only noisy
		input-output data
		are available.
		First, we provide an operator-based characterization
		of the set of systems consistent with the data. Next,
		combining this characterization with
		behavioral theory, we establish a necessary 
		and sufficient condition for the noisy data
		to be informative for quadratic stabilization.
		This condition is formulated in terms of linear matrix inequalities, whose solutions yield a
		stabilizing controller.
		Finally, we characterize 
		data informativity for system identification in the noise-free setting.
	\end{abstract}
	
\keywords{Behaviors, continuous-time systems, data-driven control, synthesis operators} 
	
	\maketitle
	
	\section{Introduction}
	\subsubsection*{Motivation and literature review}
	Data-driven control aims to design controllers for
	unknown dynamical systems from input-output data.
	Whereas model-based methods
	first construct a mathematical model and then 
	design a controller, data-driven methods operate directly on data and eliminate the intermediate identification step.
	A fundamental question in this paradigm is whether 
	the available data contain enough information to
	design controllers that achieve
	control objectives.
	To address this question, the concept of data informativity
	was introduced in \cite{Waarde2020}.
	Data informativity provides a rigorous theoretical framework to determine
	whether all systems consistent
	with the given data have the desired properties, and
	whether controllers can be designed to achieve the 
	control 
	objectives for all such data-consistent systems.
	
	Data informativity was investigated for various system
	properties and control objectives. Here
	we provide a brief review of the literature concerning stabilization; see  \cite{Waarde2023} for 
	a comprehensive introduction to the informativity approach.
	Informativity for state-feedback stabilization of discrete-time 
	systems was first characterized in \cite{Waarde2020},
	and was subsequently  extended to
	deal with process noise \cite{Waarde2022TAC,Steentjes2022,Waarde2023SIAM},
	measurement errors \cite{Bisoffi2024}, and 
	their combination \cite{Kaminaga2025arXiv}. 
	A generalization to infinite-dimensional systems
	was also proposed
	in \cite{Wakaiki2025Informativity}. 
	Furthermore,
	the case of noisy 
	input-output data
	was investigated by the behavioral approach
	in \cite{Waarde2023TAC} and 
	by the state-space approach in \cite{Steentjes2022, Li2026Automatica, Kaminaga2025arXiv}.
	Although sampled data are discrete by nature,
	many physical systems are governed by 
	continuous-time dynamics.
	Nevertheless,
	data-driven stabilization of continuous-time systems
	remains relatively unexplored compared to
	its discrete-time counterpart.
	This gap motivates us to study
	data informativity in 
	the continuous-time setting.
	
	A direct application of discrete-time techniques
	to continuous-time systems
	typically requires access to state or output derivatives
	(see, e.g., \cite{DePersis2020,Bisoffi2022,Eising2025}).
	It is worth mentioning that
	input and output derivatives play a key role
	in the continuous-time versions 
	\cite{Lopez2022, 
		Rapisarda2023SCL,
		Lopez2024, Schmitz2024}
	of the fundamental lemma developed by 
	Willems~{\em et~al.}~\cite{Willems2005}.
	However, derivative measurements
	are rarely available in practical applications.
	To circumvent this issue, several 
	derivative-free methods were proposed 
	for input-state data:
	discrete-sequence generation 
	via the integral form of the state equation \cite{DePersis2024,Song2025}
	and polynomial orthogonal bases \cite{Rapisarda2024};
	the sampling linear functional approach \cite{Ohta2024MTNS, Ohta2024,Wang2025};
	low-pass filtering \cite{Possieri2025,Bosso2025}; and 
	operator-based data embedding \cite{Wakaiki2025Cont}.
	The filtering approach was further extended
	to input-output data in \cite{Bosso2025,Bosso2025arXiv,Gao2025,Bosso2025Noisy,Possieri2026,Li2026}.
	Among these studies,
	noisy data 
	were treated for state-feedback stabilization in
	\cite{Rapisarda2024,
		Possieri2025,Wakaiki2025Cont}
	and for output-feedback stabilization in \cite{Bosso2025Noisy}.
	
\subsubsection*{Contributions}
	In this paper, we consider 
	continuous-time
	autoregressive (AR) systems described by higher-order
	differential equations.
	We assume that the system is affected by noise and that continuous-time input-output trajectories are available.
	Our objective is to design 
	a stabilizing dynamic output-feedback controller for an
	unknown system directly from the noisy data. 
	The main contributions 
	of this work are threefold:
	\begin{enumerate}\vspace{4pt}
		\renewcommand{\labelenumi}{\textup{\alph{enumi})}}
		\item We propose an operator-based data-embedding 
		technique to characterize the set of all systems
		consistent with the noisy data.
		\item 
		We derive
		a necessary and sufficient condition
		for the noisy data to be informative for
		quadratic stabilization.
		This condition is formulated in terms of linear matrix 
		inequalities (LMIs), and 
		a stabilizing controller can be constructed 
		from a feasible solution of the LMIs.
		\item 
		As a byproduct of our stabilization analysis,
		we provide a matrix-based 
		characterization of data informativity 
		for system identification in the noise-free setting.
	\end{enumerate}\vspace{4pt}
	The core technical novelty lies in establishing a characterization of data informativity that, although rooted in the operator-theoretic framework, can  be verified through matrix computations.
	This is achieved by exploiting the finite-rank structure of the operators used for the data embedding.
	
	\subsubsection*{Comparison with related work}
	The main advantage of our approach 
	over the existing methods for
	derivative-free output-feedback control 
	\cite{Bosso2025,Bosso2025arXiv,Gao2025,Bosso2025Noisy,Possieri2026,Li2026} is that 
	it directly characterizes the
	informativity of the original continuous-time data.
	Existing methods employ sampled or filtered data representations. The resulting conditions are typically 
	sufficient for controller design
	and depend on
	tuning parameters introduced by
	sampling or filtering.
	In the proposed method, we embed continuous-time data
	into synthesis operators by extending  the approach
	proposed for state-feedback stabilization in \cite{Wakaiki2025Cont}.
	This operator-theoretic approach 
	allows a seamless extension
	of data-matrix techniques developed for discrete-time systems
	to the continuous-time setting.
	In our framework, both necessity and sufficiency 
	for informativity can be addressed in terms of 
	the original continuous-time data
	without introducing sampling or filtering parameters.
	The synthesis operators used in this study 
	are variants of those in frame theory \cite{Christensen2016} and 
	are closely related to the modulating 
	function method \cite{Shinbrot1954} for system identification; see also 
	the survey article \cite{Preisig1993} for 
	this identification method.
	
	Another important distinction lies in the data setting.
	The existing studies 
	assume that the available data are given as a single
	input-output trajectory.
	In contrast, our framework explicitly accommodates 
	multiple
	input-output trajectories.
	This feature is particularly relevant for data-driven 
	stabilization
	of unstable systems, 
	where obtaining a single long trajectory 
	may be impractical because 
	the response may grow rapidly.
	Even in such scenarios, 
	the proposed method can be used to design a stabilizing
	controller from a collection of short input-output trajectories.
	
	Our approach also differs from the
	those in the existing literature in terms of model representation. While
	existing methods are based on state-space
	realizations,
	we employ a continuous-time AR 
	representation. This choice allows us to
	extend the synthesis-operator
	approach in \cite{Wakaiki2025Cont} to output-feedback stabilization,
	as in the discrete-time setting~\cite{Waarde2023TAC}.
	This treatment is enabled by
	the behavioral framework~\cite{Willems1986a, Willems1986b,
		Willems1987} and the associated concept
	of quadratic differential forms~\cite{Willems1998}.
	
	\subsubsection*{Organization}
	This paper is organized as follows.
	In Section~\ref{sec:adjoints}, 
	we introduce higher-order synthesis operators and
	provide an integral representation for
	products of synthesis operators and their
	adjoints.
	In Section~\ref{sec:operator_based_embedding},
	we characterize the set of all systems consistent
	with the given noisy data in terms of synthesis operators.
	In Section~\ref{sec:stabilization},
	the main result on
	data-driven stabilization is presented.
	In Section~\ref{sec:surjectivity},
	we briefly investigate the informativity of noise-free data
	for system identification.
	A simulation example illustrating the proposed method is given in
	Section~\ref{sec:example}. 
	Finally, Section~\ref{sec:conclusion} offers concluding remarks.
	
	\subsubsection*{Notation.}
	We denote by $\mathbb{N}$ the set of 
	positive integers and define $\mathbb{N}_0 \coloneqq 
	\mathbb{N} \cup \{ 0\}$.
	We denote by $\mathbb{S}^n$
	the set of $n \times n$ real symmetric matrices.
	For a matrix $A$,
	its transpose is denoted by
	$A^{\top}$.
	For $A \in \mathbb{S}^n$, 
	we write $A > 0$ (resp. $A \geq 0$)
	if $A$ is positive definite (resp. nonnegative definite).
	Analogous notation is used for 
	negative and nonpositive definite matrices.
	The $n\times n$ identity 
	matrix and 
	the $n\times m$ 
	zero matrix are denoted by $I_n$ and $0_{n\times m}$,
	respectively.
	We denote by
	$\langle \cdot,\cdot \rangle_{\mathbb{R}^n}$
	the standard inner product on the Euclidean
	space $\mathbb{R}^n$.
	
	Let $\tau >0$ and $L \in \mathbb{N}$.
	We denote by $\Ls([0,\tau]; \mathbb{R}^n)$
	the space of measurable functions $f\colon [0,\tau]\to \mathbb{R}^n$ satisfying $\int_0^\tau \|f(t)\|_{\mathbb{R}^n}^2 dt < \infty$.
	The Sobolev space $\Hso([0,\tau]; \mathbb{R}^n)$
	consists of all functions
	$\phi \in \Ls([0,\tau]; \mathbb{R}^n)$ such that $\phi,\phi',\dots, \phi^{(L-1)}$
	are absolutely continuous  on $[0,\tau]$ and 
	$\phi^{(L)} \in \Ls([0,\tau]; \mathbb{R}^n)$.
	We denote by $\Cinf(\mathbb{R};\mathbb{R}^n)$ 
	the space of infinitely differentiable functions
	from $\mathbb{R}$ to $\mathbb{R}^n$.
	For brevity,
	we write 
	$\Ls[0,\tau] \coloneqq 
	\Ls([0,\tau]; \mathbb{R})$, with $\Hso[0,\tau]$
	and $ \Cinf(\mathbb{R})$ defined similarly.
	We define the subspace
	$\Hs[0,\tau]$ of $\Hso[0,\tau]$ by
	\[
	\Hs[0,\tau] \coloneqq 
	\{
	\phi \in \Hso[0,\tau]:
	\phi^{(\ell)}(0)=0 \text{~and~}
	\phi^{(\ell)}(\tau) = 0 \text{~for all~} \ell=0,\dots,L-1
	\}
	\]
	with inner product
	\[
	\langle 
	\phi, \psi 
	\rangle_{\Hs} \coloneqq
	\int_0^{\tau} \phi^{(L)}(t) \psi^{(L)}(t)dt,\quad 
	\phi,\psi \in \Hs[0,\tau].
	\]

	Let $X$ and $Y$ be Hilbert spaces. We denote by
	$\mathcal{L}(X,Y)$ the space of bounded linear 
	operators from $X$ to $Y$.
	We write $\mathcal{L}(X) \coloneqq \mathcal{L}(X,X)$.
	For $T \in \mathcal{L}(X,Y)$, 
	its range and 
	kernel are denoted by
	$\Ran T$ and $\Ker T$, respectively.
	The Hilbert-space adjoint of $T$ is denoted by $T^*$.
	For a subset $M$ of $X$,
	its orthogonal complement
	is denoted by $M^{\perp}$.

	\section{Higher-order synthesis operators}
	\label{sec:adjoints}
	As in the state-feedback case~\cite{Wakaiki2025Cont},
	we embed continuous-time data into synthesis operators, which 
	are continuous-time analogues of the data matrices used in the discrete-time setting. 
	For continuous-time AR systems, we require
	higher-order synthesis operators involving higher-order derivatives.
	\begin{definition}
		\label{def:synthesis}
		Let $\tau>0$ and $L \in \mathbb{N}$.
		For $f \in \Ls([0,\tau];\mathbb{R}^n)$ 
		and $\ell =0,\dots,L$,
		we define 
		$F_\ell \in \mathcal{L}(\Hs[0,\tau]; \mathbb{R}^n)$
		by
		\[
		F_{\ell} \phi \coloneqq 
		(-1)^{\ell} \int_0^{\tau} \phi^{(\ell)}(t) 
		f(t) dt,\quad \phi \in \Hs[0,\tau].
		\]
		We call $F_{\ell}$ the {\em $\ell$-th
			synthesis operator associated with $f$}.
		We simply call $F_0$ the {\em synthesis operator
			associated with $f$}.
	\end{definition}
	
	Since $F_j$ and $F_{\ell}$ are finite-rank operators (i.e.,
	their ranges have finite dimension),
	we can identify the product $F_jF_{\ell}^*$
	with an $n \times n$ matrix.
	Note also that $F_jF_{\ell}^*$
	is determined only by $f$ and 
	is independent of any particular choice of 
	$\phi \in \Hs[0,\tau]$
	used in Definition~\ref{def:synthesis}.
	The synthesis-operator approach 
	is based on such products of synthesis operators and
	their adjoints. 
	In the remainder of this section, we derive an explicit formula
	for these products. 
	To this end, 
	we first present two technical lemmas in Section~\ref{sec:preliminary_sync} and then
	obtain an integral representation of 
	the products in Section~\ref{sec:integra_rep}.
	
	\subsection{Preliminary lemmas}
	\label{sec:preliminary_sync}
	Let $\tau >0$ and 
	define 
	the integral operator $\mathcal{J}\in \mathcal{L}(\Ls[0,\tau])$ 
	by
	\begin{equation}
		\label{eq:int_op}
		(\mathcal{J}f)(t) \coloneqq \int_0^t f(s)ds,\quad 
		t \in [0,\tau].
	\end{equation}
	It follows by induction and Fubini's theorem that
	\begin{equation}
		\label{eq:l_times_integral}
		(\mathcal{J}^\ell f)(t) = \frac{1}{(\ell-1)!}
		\int_0^t (t-s)^{\ell-1} f(s)ds
	\end{equation}
	for all $\ell \in \mathbb{N}$, $f \in \Ls[0,\tau]$, and $t \in [0,\tau]$.
	For $L \in \mathbb{N}_0$,
	we denote by
	$\mathrm{P}_L[0,\tau]$
	the space of  polynomials 
	of degree at most $L$ on the interval $[0,\tau]$.
	The first lemma shows that the orthogonal complement in $\Ls[0,\tau]$ of the subspace 
	$\{ 
	\phi^{(L)} : \phi \in \Hs[0,\tau]
	\}$ 
	coincides with $\Ps[0,\tau]$.
	\begin{lemma}
		\label{lem:orthogonal_comp}
		For all $L \in \mathbb{N}$, one has
		$\Ps[0,\tau]  = 
		\{ 
		\phi^{(L)} : \phi \in \Hs[0,\tau]
		\}^{\perp}$, where the orthogonal complement
		is taken with respect to the standard 
		inner product on
		$\Ls[0,\tau]$.
	\end{lemma}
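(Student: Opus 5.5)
Write $V \coloneqq \{\phi^{(L)} : \phi \in \Hs[0,\tau]\}$. We will first identify $V$ exactly and then take orthogonal complements.

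The key claim is
\[
V = \{ g \in \Ls[0,\tau] : \langle g, p\rangle_{\Ls} = 0 \text{ for all } p \in \Ps[0,\tau]\} = \Ps[0,\tau]^{\perp}.
\]
Once this is established, the lemma follows because $\Ps[0,\tau]$ is finite-dimensional, hence closed in $\Ls[0,\tau]$. Then $V^\perp = (\Ps[0,\tau]^{\perp})^{\perp} = \Ps[0,\tau]$.

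For the inclusion $V \subset \Ps[0,\tau]^\perp$, take $\phi \in \Hs[0,\tau]$ and $p \in \Ps[0,\tau]$, and integrate $\int_0^\tau \phi^{(L)}(t)p(t)\,dt$ by parts $L$ times. This is legitimate since $\phi,\dots,\phi^{(L-1)}$ are absolutely continuous. All boundary terms have the form $\phi^{(\ell)}(t)p^{(L-1-\ell)}(t)$ at $t=0,\tau$, and these vanish because $\phi^{(\ell)}(0)=\phi^{(\ell)}(\tau)=0$ for $\ell=0,\dots,L-1$. The remaining integral is $\pm\int_0^\tau \phi\, p^{(L)}\,dt = 0$, since $\deg p\le L-1$.

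For the reverse inclusion, take $g \in \Ls[0,\tau]$ orthogonal to $1,t,\dots,t^{L-1}$ and set $\phi \coloneqq \mathcal{J}^L g$. By \eqref{eq:l_times_integral}, $\phi^{(\ell)} = \mathcal{J}^{L-\ell}g$ for $\ell=0,\dots,L-1$. Each of these is absolutely continuous, $\phi^{(L)} = g$ almost everywhere, and $\phi^{(\ell)}(0)=0$ trivially. At the right endpoint,
\[
\phi^{(\ell)}(\tau) = \frac{1}{(L-\ell-1)!}\int_0^\tau (\tau-s)^{L-\ell-1} g(s)\,ds,
\]
and $(\tau-s)^{L-\ell-1}$ is a polynomial of degree at most $L-1$ in $s$. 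So $\phi^{(\ell)}(\tau)=0$ by the orthogonality assumption. Hence $\phi\in\Hs[0,\tau]$ and $g=\phi^{(L)}\in V$.

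The main obstacle is only bookkeeping: verifying carefully the regularity claims for $\mathcal{J}^L g$ (that its derivatives are the lower iterated integrals and that the $L$-th derivative is $g$ a.e.). This follows from the Lebesgue differentiation theorem applied to $\mathcal{J}g$ and induction. Everything else is elementary.
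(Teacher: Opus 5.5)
Your proof is correct and follows essentially the same route as the paper: integration by parts $L$ times for the easy inclusion, the construction $\phi = \mathcal{J}^L g$ with the right-endpoint conditions supplied by orthogonality to $(\tau-s)^{L-\ell-1}$, and finally $((\Ps[0,\tau])^{\perp})^{\perp} = \Ps[0,\tau]$ by finite-dimensionality. The only difference is cosmetic: you first record the identity $V = (\Ps[0,\tau])^{\perp}$ and then take complements, whereas the paper proves $\Ps[0,\tau] \subseteq V^{\perp}$ directly and uses $(\Ps[0,\tau])^{\perp} \subseteq V$ only for the reverse inclusion.
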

	\begin{proof}
		Let $f \in\Ps[0,\tau]$. Then
		$f^{(L)} = 0$. Hence, for all $\phi \in \Hs[0,\tau]$,
		integrating by parts $L$ times yields
		\[
		\int_0^{\tau} \phi^{(L)}(t) f(t)dt = (-1)^L
		\int_0^{\tau} \phi(t) f^{(L)}(t)dt = 0.
		\]
		This implies that $f \in \{ 
		\phi^{(L)} : \phi \in \Hs[0,\tau]
		\}^{\perp}$.
		
		To show the converse inclusion,
		let $g\in
		(\Ps[0,\tau])^{\perp}$, and define
		$\psi  \coloneqq \mathcal{J}^Lg$, where
		the operator $\mathcal{J}$ is as in \eqref{eq:int_op}.
		Then $\psi \in \Hso[0,\tau]$ and 
		$\psi^{(\ell)} = \mathcal{J}^{L-\ell}g$ for all $\ell=0,\dots,L$. 
		By the definition of $\mathcal{J}$, 
		\[
		\psi^{(\ell)}(0) = (\mathcal{J}^{L-\ell}g)(0)= 0
		\] 
		for all $\ell =0,\dots,L-1$.
		From $g\in
		(\Ps[0,\tau])^{\perp}$ and \eqref{eq:l_times_integral},
		it also follows that
		for all $\ell =0,\dots,L-1$, 
		\[
		\psi^{(\ell)}(\tau) = (\mathcal{J}^{L-\ell}g)(\tau) = 
		\frac{1}{(L-\ell-1)!} 
		\int_0^{\tau} (\tau-s)^{L-\ell-1} g(s) ds = 0.
		\]
		Therefore, $\psi \in \Hs[0,\tau]$.
		Noting that $g= \psi^{(L)}$, we have
		\begin{equation}
			\label{eq:P_perp_inclusion}
			(\Ps[0,\tau])^{\perp}  \subseteq 
			\{ 
			\phi^{(L)} : \phi \in \Hs[0,\tau]
			\}.
		\end{equation}
		Since $\Ps[0,\tau]$ is a finite-dimensional space, it is closed.
		Hence,
		$((\Ps[0,\tau])^{\perp})^{\perp} = \Ps[0,\tau]$.
		Combining this with \eqref{eq:P_perp_inclusion},
		we obtain the desired inclusion.
	\end{proof}
	Define the
	polynomial vector $\lambda_j$ by
	\begin{equation}
		\label{eq:Gamk_def}
		\lambda_j(t) \coloneqq 
		\renewcommand{\arraystretch}{2.3}
		\begin{bmatrix}
			\dfrac{0!}{j!} t^{j} \\
			\dfrac{1!}{(j+1)!} t^{j+1} \\
			\vdots \\
			\dfrac{(L-1)!}{(j+L-1)!} t^{j+L-1}
		\end{bmatrix},\quad j=0,\dots,L.
	\end{equation}
	Define the $L \times L$ polynomial matrix $\Lambda$ by
	\begin{align}
		\label{eq:Gam_def}
		\Lambda(t) &\coloneqq 
		\begin{bmatrix}
			\lambda_1(t) & \cdots & 
			\lambda_L(t)
		\end{bmatrix}^{\top}.
	\end{align}
	Since 
	\[
	\int_0^1 (1-s)^{j-1} s^{\ell-1}  ds
	=  \frac{(j-1)!(\ell-1)!}{(j+\ell-1)!}
	\]
	for all $j,\ell \in \mathbb{N}$ (see, e.g., \cite[Fact~14.2.1]{Bernstein2018}), 
	we obtain
	\begin{equation}
		\label{eq:beta_func}
		\frac{1}{(j-1)!}
		\int_0^{t} (t-s)^{j-1} \lambda_0(s) ds = 
		\lambda_j(t)
	\end{equation}
	for all $j=1,\dots,L$.

	The next lemma will be used to derive
	a formula for the adjoints of synthesis operators.
	\begin{lemma}
		\label{lem:Gam_inv}
		Define $\Lambda(t) \in \mathbb{R}^{L\times L}$
		by \eqref{eq:Gam_def}. Then
		$\Lambda(t)$ is invertible for all $t>0$.	
	\end{lemma}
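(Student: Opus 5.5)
The plan is a kernel argument: show that $\Lambda(t)c = 0$ forces $c = 0$, using the integral identity \eqref{eq:beta_func} to turn the linear system into orthogonality conditions on a polynomial.

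Fix $t > 0$ and $c \in \mathbb{R}^L$ with $\Lambda(t) c = 0$. Define the scalar polynomial
\[
p(s) \coloneqq \lambda_0(s)^{\top} c = \sum_{k=1}^{L} \frac{(k-1)!}{(k-1)!}\, c_k\, s^{k-1} = \sum_{k=1}^{L} c_k s^{k-1},
\]
which lies in $\Ps[0,t]$. By the definition \eqref{eq:Gam_def}, the $j$-th entry of $\Lambda(t)c$ is $\lambda_j(t)^{\top} c$. By \eqref{eq:beta_func}, this entry equals
\[
\frac{1}{(j-1)!}\int_0^{t} (t-s)^{j-1} p(s)\, ds, \qquad j = 1,\dots,L.
\]
So $\Lambda(t)c = 0$ says exactly that $p$ is orthogonal in $\Ls[0,t]$ to each function $s \mapsto (t-s)^{j-1}$, $j = 1,\dots,L$.

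Next, these $L$ functions span $\Ps[0,t]$. They have distinct degrees $0,\dots,L-1$ in $s$, so they are linearly independent, and $\Ps[0,t]$ has dimension $L$. Hence $p$ is orthogonal to all of $\Ps[0,t]$, including $p$ itself. Then $\int_0^t p(s)^2\, ds = 0$, and because $t > 0$ and $p$ is continuous, $p \equiv 0$ on $[0,t]$.

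A polynomial vanishing on an interval of positive length has all coefficients zero, so $c_1 = \dots = c_L = 0$. Thus $\Ker \Lambda(t) = \{0\}$, and since $\Lambda(t)$ is square it is invertible.

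The only point needing care is the identification of the rows of $\Lambda(t)$ with the iterated integrals of $\lambda_0$, including the correct transposition in \eqref{eq:Gam_def} (row $j$ of $\Lambda(t)$ is $\lambda_j(t)^{\top}$). This is supplied directly by \eqref{eq:beta_func}, so no genuine obstacle remains.

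An alternative, if a more explicit route were wanted, is to factor
\[
\Lambda(t) = \operatorname{diag}(t^j)_{j=1}^{L}\; M \;\operatorname{diag}\bigl((k-1)!\, t^{k-1}\bigr)_{k=1}^{L}, \qquad M_{jk} = \frac{1}{(j+k-1)!},
\]
and prove $\det M \neq 0$. The orthogonality argument above avoids computing that determinant.
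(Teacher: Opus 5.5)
Your proof is correct and takes essentially the same approach as the paper: a kernel argument in which \eqref{eq:beta_func} turns $\Lambda(t)c=0$ into orthogonality of the polynomial $\lambda_0(\cdot)^{\top}c$ to the spanning set $\{(t-s)^{j-1}\}_{j=1}^{L}$ of $\Ps[0,t]$. Testing that polynomial against itself then forces it, and hence $c$, to vanish. The factorization you mention as an alternative is not needed, and the paper does not use it.
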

	\begin{proof}
		Fix $t>0$, and 
		suppose that $\Lambda(t)b = 0$ for some
		$b \in \mathbb{R}^{L}$.
		Then
		\begin{equation}
			\label{eq:gamma_ib0}
			\lambda_j(t)^{\top} b = 0
		\end{equation}
		for all $j=1,\dots,L$.
		By \eqref{eq:beta_func} and \eqref{eq:gamma_ib0},
		\[
		\int_0^t (t-s)^{j-1} \lambda_0(s)^{\top} bds = (j-1)! \lambda_{j}(t)^{\top}b = 0
		\]
		for all $j=1,\dots,L$.
		Since $\Ps[0,t]$ is spanned by
		$\{(t-s)^{j-1}:j=1,\dots,L \}$,
		we have 
		\begin{equation}
			\label{eq:fgb_int}
			\int_0^t f(s) \lambda_0(s)^{\top}b  ds = 0
		\end{equation}
		for all $f \in \Ps[0,t]$.
		Substituting 
		$f(s) = \lambda_0(s)^{\top}b$
		into \eqref{eq:fgb_int},
		we derive 
		\[
		\lambda_0(s)^{\top}b= 0
		\]
		for all $s \in [0,t]$,
		which implies that $b = 0$. Thus, $\Lambda(t)$ is invertible.
	\end{proof}
	
	\subsection{Products
		of synthesis operators and their adjoints}
	\label{sec:integra_rep}
	The following proposition provides an 
	explicit representation
	of the adjoints of synthesis operators
	and a formula for the corresponding products.
	\begin{proposition}
		\label{prop:adjoint}
		Let $\tau >0$ and $L \in \mathbb{N}$.
		For $\ell =0,\dots,L$,
		let  $F_{\ell}\in 
		\mathcal{L}(\Hs[0,\tau],\mathbb{R}^n)$
		and $G_{\ell}\in 
		\mathcal{L}(\Hs[0,\tau],\mathbb{R}^m)$ be the $\ell$-th
		synthesis operators associated with $f\in \Ls([0,\tau];\mathbb{R}^n)$ and $g\in \Ls([0,\tau];\mathbb{R}^m)$, respectively.
		Define $\lambda_j(t)\in \mathbb{R}^{L}$ and $\Lambda(t) \in \mathbb{R}^{L\times L}$ by
		\eqref{eq:Gamk_def} and \eqref{eq:Gam_def}, respectively.
		Define
		$\widetilde{f}_\ell \in \Ls([0,\tau];\mathbb{R}^n)$
		and $\Theta_{\ell} \in \mathbb{R}^{L \times n}$ by
		\begin{align*}
			\widetilde{f}_\ell(t) &\coloneqq 
			\begin{cases}
				(-1)^L f(t),& \ell=L, \\[4pt]
				\displaystyle
				\frac{(-1)^{L}}{(L-\ell-1)!} \int_0^t (t-s)^{L-\ell-1} f(s)ds,
				&\ell=-L,-L+1,\dots,L-1,
			\end{cases} \\
			\Theta_{\ell} &\coloneqq \Lambda(\tau)^{-1}
			\begin{bmatrix}
				\widetilde{f}_{\ell-1}(\tau) & 
				\widetilde{f}_{\ell-2}(\tau) & \cdots &
				\widetilde{f}_{\ell-L}(\tau)
			\end{bmatrix}^{\top},\quad \ell=0,\dots,L.
		\end{align*}
		Then the following statements hold:
		\begin{enumerate}
			\renewcommand{\labelenumi}{\textup{\alph{enumi})}}
			\item 
			For all $\ell=0,\dots,L$, $b \in \mathbb{R}^n$,
			and $t \in [0,\tau]$,
			the adjoint $F_{\ell}^* \in 
			\mathcal{L}(\mathbb{R}^n, \Hs[0,\tau])$
			satisfies
			\begin{equation}
				\label{eq:Tl_adjoint}
				(F_{\ell}^*b)(t) = 
				(\widetilde{f}_{\ell-L}(t)^{\top} - \lambda_L(t)^{\top}\Theta_{\ell} )b.
			\end{equation}
			\item
			For all $j,\ell=0,\dots,L$, 
			the matrix $G_jF_\ell^* \in \mathbb{R}^{m\times n}$
			is given by
			\begin{equation}
				\label{eq:SjTl}
				G_jF_\ell^* =
				(-1)^j
				\int_0^{\tau} g(t) 
				( \widetilde{f}_{j+\ell -L} (t)^{\top} - 
				\lambda_{L-j}(t)^{\top}\Theta_{\ell}
				)dt.
			\end{equation}
		\end{enumerate}
	\end{proposition}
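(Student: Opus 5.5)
To identify $F_\ell^*b$, I will construct a candidate function $\psi_b$ and check two things. First, $\psi_b$ lies in $\Hs[0,\tau]$. Second, $\langle \phi,\psi_b\rangle_{\Hs} = \langle F_\ell\phi,b\rangle_{\mathbb{R}^n}$ for all $\phi\in\Hs[0,\tau]$. Both sides are linear in $b$, so it suffices to work with the scalar function $h\coloneqq f^\top b\in \Ls[0,\tau]$.

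The candidate is $\psi_b(t) = \widetilde{f}_{\ell-L}(t)^\top b - \lambda_L(t)^\top\Theta_\ell b$. The first term is $(-1)^L\mathcal{J}^{2L-\ell}h$, by \eqref{eq:l_times_integral}. The second term is a polynomial of degree at most $2L-1$, and its components vanish to order $L$ at $0$. The plan is as follows.

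First, I compute $\psi_b^{(L)}$. Differentiating $\mathcal{J}^{2L-\ell}h$ $L$ times gives $\mathcal{J}^{L-\ell}h$, interpreted as $h$ when $\ell=L$. Since $\lambda_L^{(L)}(t) = \lambda_0(t)$ by \eqref{eq:beta_func}, we get
\[
\psi_b^{(L)} = (-1)^L\mathcal{J}^{L-\ell}h - \lambda_0^\top\Theta_\ell b = (-1)^L\mathcal{J}^{L-\ell}h + p,
\]
where $p\in\Ps[0,\tau]$.

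Second, I verify the adjoint identity. Lemma~\ref{lem:orthogonal_comp} shows that $\int_0^\tau\phi^{(L)}p\,dt=0$, so it remains to show
\[
(-1)^L\int_0^\tau \phi^{(L)}\,\mathcal{J}^{L-\ell}h\,dt = (-1)^\ell\int_0^\tau\phi^{(\ell)}h\,dt.
\]
I will get this by integrating by parts $L-\ell$ times. The boundary terms vanish because $\phi^{(k)}(0)=\phi^{(k)}(\tau)=0$ for $k\le L-1$. Note that here only the boundary conditions on $\phi$ are needed, not any on $\mathcal{J}h$.

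Third, and this is the main obstacle, I must check the boundary conditions of $\psi_b$, namely that $\psi_b\in\Hs[0,\tau]$. At $t=0$, every $\psi_b^{(k)}(0)$ with $k\le L-1$ vanishes. For the integral term this holds because $\mathcal{J}$ produces functions vanishing at $0$. For the polynomial term it holds because the entries of $\lambda_L$ are multiples of $t^{L},\dots,t^{2L-1}$.

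At $t=\tau$, I need $\psi_b^{(k)}(\tau)=0$ for $k=0,\dots,L-1$. The derivative $\frac{d^k}{dt^k}\widetilde f_{\ell-L}$ equals $\widetilde f_{\ell-L+k}$, and $\lambda_L^{(k)}=\lambda_{L-k}$, again by differentiating \eqref{eq:beta_func}. So the conditions read
\[
\lambda_{L-k}(\tau)^\top\Theta_\ell b = \widetilde f_{\ell-L+k}(\tau)^\top b, \qquad k=0,\dots,L-1.
\]
Reindexing with $j=L-k\in\{1,\dots,L\}$, these $L$ equations stack into
\[
\Lambda(\tau)\Theta_\ell = \begin{bmatrix}\widetilde f_{\ell-1}(\tau) & \cdots & \widetilde f_{\ell-L}(\tau)\end{bmatrix}^\top,
\]
with the rows ordered $j=1,\dots,L$. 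By Lemma~\ref{lem:Gam_inv}, $\Lambda(\tau)$ is invertible, so this system is solved by exactly the stated definition of $\Theta_\ell$.

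I also need to check that the index conventions match, in particular that $\widetilde f_{L}$ corresponds to zero integrations and that the factorial normalizations agree with \eqref{eq:l_times_integral}. The indices $\ell-L+k$ run up to $\ell-1\le L-1$, so $\widetilde f_L$ itself is never needed here; it enters only through $\psi_b^{(L)}$ when $\ell=L$. This settles a).

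For b), I start from $b^\top G_jF_\ell^* a = (G_j F_\ell^*a)^\top b$ and write
\[
G_jF_\ell^*a = (-1)^j\int_0^\tau (F_\ell^*a)^{(j)}(t)\,g(t)\,dt.
\]
I then differentiate formula \eqref{eq:Tl_adjoint} $j$ times. This gives $\widetilde f_{j+\ell-L}$ and $\lambda_{L-j}$, using the same derivative rules as above, and when $j=L$ it uses $\lambda_0$ and $\widetilde f_\ell$, including the case $\widetilde f_L=(-1)^Lf$. Finally, I pull out the constant vector $a$ to obtain \eqref{eq:SjTl}.
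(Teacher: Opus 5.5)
Your proof is correct, but it runs in the opposite logical direction from the paper's.

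\textbf{The paper's route.} The paper derives the formula. It integrates by parts to get $\int_0^\tau \phi^{(L)}\big((F_\ell^*b)^{(L)} - \mathcal{J}^{L-\ell}f_b\big)\,dt = 0$ for all $\phi\in\Hs[0,\tau]$. It then invokes the nontrivial inclusion $\{\phi^{(L)}:\phi\in\Hs[0,\tau]\}^\perp\subseteq\Ps[0,\tau]$ of Lemma~\ref{lem:orthogonal_comp}. This shows that $(F_\ell^*b)^{(L)}$ equals $\mathcal{J}^{L-\ell}f_b$ plus an unknown polynomial $\gamma=\lambda_0^\top b_0$. The paper integrates this back up using the zero initial conditions. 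Finally, it solves the boundary conditions at $\tau$ for $b_0$ via $\Lambda(\tau)^{-1}$.

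\textbf{Your route.} You take the stated expression as a candidate and check two things: that it lies in $\Hs[0,\tau]$, and that it represents the functional $\phi\mapsto\langle F_\ell\phi,b\rangle_{\mathbb{R}^n}$. You then conclude by uniqueness of the adjoint. This needs only the easy half of Lemma~\ref{lem:orthogonal_comp}, namely that polynomials of degree at most $L-1$ are orthogonal to every $\phi^{(L)}$. Lemma~\ref{lem:Gam_inv} enters only to make $\Theta_\ell$ well defined. Your boundary computation at $\tau$, which stacks the rows $\lambda_j(\tau)^\top\Theta_\ell=\widetilde f_{\ell-j}(\tau)^\top$, is the same linear system the paper solves, read in the other direction. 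Your index bookkeeping is correct: only $\widetilde f_m$ with $m\le\ell-1$ appears at the boundary, and $\widetilde f_L$ enters only through the $L$-th derivative.

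\textbf{Comparison.} The paper's argument explains where the formula comes from, since the polynomial correction and $\Theta_\ell$ are forced rather than guessed. Yours is shorter and more elementary once the answer is known. Part b) is handled the same way in both: differentiate the formula from a) $j$ times, noting that for $j=L$ the identity holds almost everywhere.
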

	\begin{proof}	
		a)
		Let $\ell =0,\dots,L$, $b \in \mathbb{R}^n$, 
		and $\phi \in \Hs[0,\tau]$.
		Define
		$f_b(t) \coloneqq (-1)^{L}f(t)^{\top}b$  for
		$t \in [0,\tau]$.
		Integrating by parts $L-\ell$ times, we can write
		the inner product of $F_{\ell} \phi$ and $b$,
		defined by
		$
		\langle 
		F_{\ell} \phi, b
		\rangle_{\mathbb{R}^n} \coloneqq 
		(F_{\ell} \phi)^{\top} b$,
		as
		\[
		\langle 
		F_{\ell} \phi, b
		\rangle_{\mathbb{R}^n}=
		\int_0^{\tau} \phi^{(L)}(t) (\mathcal{J}^{L-\ell}f_b)(t)  dt,
		\]
		where the operator $\mathcal{J}$
		is as in \eqref{eq:int_op}.
		By the definition of adjoints, we also have
		\[
		\langle 
		F_{\ell} \phi, b
		\rangle_{\mathbb{R}^n} =
		\langle 
		\phi, F_{\ell}^{*}b
		\rangle_{\Hs} = \int_0^{\tau} \phi^{(L)}(t) 
		(F_{\ell}^{*}b)^{(L)}(t)dt.
		\]
		Therefore, we obtain
		\[
		\int_0^{\tau} \phi^{(L)}(t) \left(
		(F_{\ell}^{*}b)^{(L)}(t) -  (\mathcal{J}^{L-\ell}f_b)(t)
		\right)
		dt = 0.
		\]
		
		By Lemma~\ref{lem:orthogonal_comp},
		there exists $\gamma \in \Ps[0,\tau]$ such that
		\begin{equation}
			\label{eq:L_derivative}
			(F_{\ell}^{*}b)^{(L)}(t) =  (\mathcal{J}^{L-\ell}f_b)(t)
			+\gamma(t)
		\end{equation}
		for a.e.~$t \in [0,\tau]$.
		Since $(F_{\ell}^{*}b)^{(j)}(0) = 0$ for all $j=0,\dots,L-1$, 
		the fundamental theorem of calculus shows that 
		\begin{equation}
			\label{eq:n_derivative}
			(F_{\ell}^{*}b)^{(j)}(t) = 
			(\mathcal{J}^{2L-j-\ell} f_b)(t) + (\mathcal{J}^{L-j}\gamma)(t)
		\end{equation}
		for all $t \in [0,\tau]$ and $j =0,\dots,L-1$.
		Choose $b_0 \in \mathbb{R}^L$ such that 
		its $j$-th element is 
		the coefficient of $t^{j-1}$ in the polynomial $\gamma(t)$.
		Then 
		\[
		\gamma(t) = \lambda_0(t)^{\top} b_0
		\]
		for all $t \in [0,\tau]$, where 
		$\lambda_0(t)$ is as in \eqref{eq:Gamk_def}.
		By \eqref{eq:l_times_integral} and  
		\eqref{eq:beta_func},
		\begin{equation}
			\label{eq:EfEt_expression}
			(\mathcal{J}^{2L-j-\ell} f_b)(t) + (\mathcal{J}^{L-j}\gamma)(t) 
			=
			\widetilde{f}_{j+\ell-L}(t)^{\top}b + 
			\lambda_{L-j}(t)^{\top}b_0
		\end{equation}
		for all $t \in [0,\tau]$ and 
		$j=0,\dots,L-1$. 
		Combining \eqref{eq:n_derivative} with
		\eqref{eq:EfEt_expression}, we obtain
		\begin{equation}
			\label{eq:n_derivative2}
			(F_{\ell}^{*}b)^{(j)}(t) = 
			\widetilde{f}_{j+\ell-L}(t)^{\top}b + 
			\lambda_{L-j}(t)^{\top}b_0
		\end{equation}
		for all $t \in [0,\tau]$ and 
		$j=0,\dots,L-1$.
		
		We now show that 
		\begin{equation}
			\label{eq:c_expression}
			b_0 = -\Theta_{\ell}b.
		\end{equation}
		Since
		$(F_{\ell}^{*}b)^{(j)}(\tau) = 0$ for all $j=0,\dots,L-1$,
		we deduce from \eqref{eq:n_derivative} that
		\begin{equation}
			\label{eq:tau_boundary}
			(\mathcal{J}^{2L-j-\ell} f_b)(\tau) + (\mathcal{J}^{L-j}\gamma)(\tau)= 0
		\end{equation}
		for all $j = 0,\dots,L-1$.
		From \eqref{eq:EfEt_expression} and \eqref{eq:tau_boundary},
		it follows that
		\[
		\lambda_{L-j}(\tau)^{\top}b_0= - \widetilde{f}_{j+\ell-L}(\tau)^{\top}b 
		\]
		for all $j = 0,\dots,L-1$.
		Since $\Lambda(\tau)$ is invertible by Lemma~\ref{lem:Gam_inv},
		we conclude that \eqref{eq:c_expression} holds.

		By \eqref{eq:n_derivative2} with 
		$j=0$,
		we obtain
		\begin{equation}
			\label{eq:Tell_b}
			(F_{\ell}^*b)(t)
			= 
			\widetilde{f}_{\ell-L}(t)^{\top}b + 
			\lambda_L(t)^{\top}b_0
		\end{equation}
		for all $t \in [0,\tau]$.
		The assertion \eqref{eq:Tl_adjoint} is an immediate consequence of \eqref{eq:c_expression} and \eqref{eq:Tell_b}.
		
		b)
		Let $j,\ell \in \{0,\dots,L\}$ and $b \in \mathbb{R}^n$.
		By definition,
		\begin{equation}
			\label{eq:GjFl*}
			G_j F_{\ell}^* b = 
			(-1)^j
			\int_0^{\tau} (F_{\ell}^*b)^{(j)}(t) 
			g(t) dt.
		\end{equation}
		If $0 \leq j \leq L-1$, then
		\eqref{eq:n_derivative2}
		and \eqref{eq:c_expression} yield
		\begin{align}
			\label{eq:Tlb}
			(F_{\ell}^*b)^{(j)}(t) 
			&=
			(\widetilde{f}_{j+\ell-L}(t)^{\top} - \lambda_{L-j}(t)^{\top} \Theta_{\ell})b
		\end{align}
		for all $t \in [0,\tau]$.
		It follows from \eqref{eq:L_derivative}
		that 
		\eqref{eq:Tlb} is also valid for a.e.~$t \in [0,\tau]$
		in the case $j=L$.
		Substituting \eqref{eq:Tlb} into \eqref{eq:GjFl*},
		we obtain
		\begin{align*}
			G_jF_{\ell}^{*}b &=
			(-1)^j\int_0^{\tau} 
			\big( (\widetilde{f}_{j+\ell-L}(t)^{\top} - \lambda_{L-j}(t)^{\top}\Theta_{\ell} )b \big)g(t)dt  \\
			&=
			\left( (-1)^j 
			\int_0^{\tau}  g(t)
			( 
			\widetilde{f}_{j+\ell-L}(t)^{\top} - \lambda_{L-j}(t)^{\top}\Theta_{\ell}
			) dt 
			\right)b. 
		\end{align*}
		Thus, 
		the integral representation \eqref{eq:SjTl}
		of $G_jF_\ell^*$ is obtained.
	\end{proof}

	\section{Operator-based data embedding}
	\label{sec:operator_based_embedding}
	In this section, we develop
	an operator-based method for data embedding.
	We first describe the input-output data considered in this paper and introduce the associated synthesis operators in Section~\ref{sec:data_synthesis_operators}.
	We then characterize the 
	set of systems consistent with
	the given data in Section~\ref{sec:characterization_data_consistency}.
	\subsection{System and data}
	\label{sec:data_synthesis_operators}
	Consider the following continuous-time AR system of order $L \in \mathbb{N}$:
	\begin{equation}
		\label{eq:system_original}
		y^{(L)}(t) + \sum_{\ell=0}^{L-1}
		\left(
		R_{u,\ell} u^{(\ell)}(t)  + R_{y,\ell} y^{(\ell)}(t) 
		\right)
		= v(t),\quad t\geq 0,
	\end{equation}
	where $R_{u,\ell} \in \mathbb{R}^{p \times m}$
	and $R_{y,\ell} \in \mathbb{R}^{p\times p}$
	for $\ell=0,\dots,L-1$. In the system
	\eqref{eq:system_original},
	$u(t) \in \mathbb{R}^m$,
	$y(t) \in \mathbb{R}^p$, and 
	$v(t) \in \mathbb{R}^p$ are 
	the input, the output, and the noise at time $t$,
	respectively.
	Let $q \coloneqq m+p$, and
	define $R \in \mathbb{R}^{p\times qL}$ by
	\begin{equation}
		\label{eq:R_def}
		R \coloneqq 
		\begin{bmatrix}
			R_{u,0} & R_{y,0} & R_{u,1} & R_{y,1} & \cdots & R_{u,L-1} & R_{y,L-1}
		\end{bmatrix}.
	\end{equation}
	Then the system \eqref{eq:system_original} can be written as
	\begin{equation}
		\label{eq:system}
		y^{(L)}(t) + R
		\begin{bmatrix}
			u(t) \\ y(t)  \\ \vdots \\ u^{(L-1)}(t) \\ y^{(L-1)}(t)
		\end{bmatrix} = v(t),\quad t\geq 0.
	\end{equation}
	Since the matrix $R$ uniquely determines the system \eqref{eq:system},
	we identify the matrix $R$ with the system \eqref{eq:system}
	throughout this paper.
	We consider a scenario where the system order $L$ 
	is known, whereas 
	the matrix $R$ and the noise $v$ are unknown.
	Input-output data generated by the system \eqref{eq:system} 
	are available, as assumed in the 
	existing studies \cite{Bosso2025,Bosso2025arXiv,Gao2025,Bosso2025Noisy,Possieri2026,Li2026}.
	
	Let $K \in \mathbb{N}$ and 
	$\tau_1,\dots,\tau_K >0$. 
	For $k=1,\dots,K$, 
	we denote 
	the $k$-th input trajectory by $u_k$ and 
	the $k$-th output trajectory by $y_k$. 
	Since \eqref{eq:system}
	involves the $(L-1)$-th derivative of the input and
	the $L$-th  derivative of the output,
	we assume that 
	$u_k \in \mathrm{H}^{L-1}([0,\tau_k];\mathbb{R}^m)$ 
	and $y_k \in \Hso([0,\tau_k];\mathbb{R}^p)$ for all $k=1,\dots,K$.
	Let
	$v_k \in \Ls([0,\tau_k];\mathbb{R}^p)$ be the noise corrupting
	the $k$-th trajectory.
	We expect that an
	unknown true system $R \in \mathbb{R}^{p\times qL}$ satisfies 
	\begin{equation}
		\label{eq:system_k}
		y_k^{(L)}(t) + R
		\begin{bmatrix}
			u_k(t) \\ y_k(t)  \\ \vdots \\ u_k^{(L-1)}(t) \\ y_k^{(L-1)}(t)
		\end{bmatrix} = v_k(t)\quad \text{for a.e.~$t \in[0,\tau_k]$
			and all $k=1,\dots, K$}.
	\end{equation}
	
	Let $\tau \coloneqq (\tau_k)_{k=1}^K$.
	To simplify the notation, we define
	the data set $\Gamma_{\tau}$ and 
	the noise set $\Delta_{\tau}$ by
	\begin{align*}
		\Gamma_{\tau} &\coloneqq
		\{
		(u_k,y_k)_{k=1}^{K}: u_k \in \mathrm{H}^{L-1}([0,\tau_k];\mathbb{R}^m) \\
		&\hspace{90pt}\text{~and~}y_k \in \Hso([0,\tau_k];\mathbb{R}^p) \text{~for all~}k=1,\dots,K
		\},\\
		\Delta_{\tau} &\coloneqq
		\{
		(v_k)_{k=1}^K: v_k \in \Ls([0,\tau_k];\mathbb{R}^p)
		\text{~for all~}k=1,\dots,K
		\}.
	\end{align*}
	We denote by
	$\mathcal{H}_{\tau}$
	the orthogonal direct sum
	of the spaces $(\Hs[0,\tau_k])_{k=1}^K$; i.e.,
	$\Ht$ is defined by
	\begin{equation}
		\label{eq:Ht_def}
		\Ht \coloneqq 
		\{
		(\phi_k)_{k=1}^K: \phi_k \in \Hs[0,\tau_k] \text{~for all $k=1,\dots,K$}
		\}
	\end{equation}
	with the standard inner product given by the sum of the 
	componentwise $\Hs$-inner products.
	
	\subsection{Characterization of the set of data-consistent
		systems}
	\label{sec:characterization_data_consistency}
	Let $\tau = (\tau_k)_{k=1}^K$ with $\tau_1,\dots,\tau_K>0$.
	Let $(u_k,y_k)_{k=1}^K \in 
	\Gamma_{\tau}
	$ be the available input-output data, and 
	let $(v_k)_{k=1}^K \in \Delta_{\tau}$
	be the associated noise.
	Throughout this paper, 
	we use the following synthesis operators
	for $\ell=0,\dots,L-1$ and
	$k=1,\dots,K$:\vspace{4pt}
	\begin{enumerate}
		\renewcommand{\labelenumi}{\textup{\alph{enumi})}}
		\item 
		$W_{\ell,k}$ is the 
		$\ell$-th synthesis operator associated with
		the $k$-th input-output pair
		$\begin{bmatrix}
			u_k \\ y_k
		\end{bmatrix}$.
		\item $Y_{L,k}$ is the $L$-th synthesis operator
		associated with the $k$-th output $y_k$.
		\item 
		$V_{0,k}$ is
		the
		synthesis operator associated with the $k$-th noise
		$v_k$.
	\end{enumerate}\vspace{4pt}
	Using the function space $\Ht$ defined by
	\eqref{eq:Ht_def}, we define the operators $H \in \mathcal{L}(\Ht,\mathbb{R}^{qL})$ and 
	$Y_L,V_0  \in \mathcal{L}(\Ht, \mathbb{R}^p)$ by
	\begin{align}
		\label{eq:H_def}
		H \phi &\coloneqq 
		\sum_{k=1}^{K}
		\begin{bmatrix}
			W_{0,k}\phi_k \\ 
			\vdots \\
			W_{L-1,k}\phi_k
		\end{bmatrix}, \\
		Y_{L} \phi &\coloneqq 
		\sum_{k=1}^{K}Y_{L,k}\phi_k, \label{eq:YL_def} \\
		\label{eq:V0_def}
		V_0 \phi &\coloneqq 
		\sum_{k=1}^{K}V_{0,k}\phi_k
	\end{align}
	for $\phi = (\phi_k)_{k=1}^K \in \Ht$.
	The operator $H$ plays the same role as
	a data Hankel matrix
	employed in the discrete-time setting~\cite{Waarde2023TAC}.
	For $k=1,\dots,K$, we sometimes use the operator
	$H_k \in \mathcal{L}(\Hs[0,\tau_k],\mathbb{R}^{qL})$
	associated only with the $k$-th input-output pair, which
	is defined by
	\begin{equation}
		\label{eq:Hi_def}
		H_k\phi_k  \coloneqq 
		\begin{bmatrix}
			W_{0,k}\phi_k    \\ \vdots \\ W_{L-1,k}\phi_k 
		\end{bmatrix},\quad \phi_k 
		\in \Hs[0,\tau_k]
	\end{equation}
	
	We provide a data-consistency condition in terms of 
	the operators $H$, $Y_L$, and $V_0$, which extends the state-feedback result for single-trajectory data~\cite[Lemma~2.3]{Wakaiki2025Cont}.
	\begin{lemma}
		\label{lem:data_consistency_cond}
		Let $(u_k,y_k)_{k=1}^K \in 
		\Gamma_{\tau}
		$ and $(v_k)_{k=1}^K \in \Delta_{\tau}$.
		Let the operators $H$, $Y_L$ and $V_0$ be as above.
		Then
		the following statements are equivalent
		for 
		all $R \in \mathbb{R}^{p\times qL}$:
		\begin{enumerate}
			\renewcommand{\labelenumi}{\textup{(\roman{enumi})}}
			\item 
			The differential equations given in \eqref{eq:system_k} hold.
			\item 
			The operator equation $RH + Y_L = V_0$ holds.
		\end{enumerate} 
	\end{lemma}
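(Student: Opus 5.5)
The plan is to reduce both directions to a statement about a single trajectory, by decoupling over $k$, and then to use integration by parts together with the density of test functions. Since $\Ht$ is the orthogonal direct sum of the spaces $\Hs[0,\tau_k]$, the identity $RH+Y_L=V_0$ on $\Ht$ is equivalent to $RH_k+Y_{L,k}=V_{0,k}$ on $\Hs[0,\tau_k]$ for every $k=1,\dots,K$. For the forward direction take $\phi=(\phi_k)$ and use linearity. For the converse take $\phi$ supported in a single component, i.e.\ $\phi_j=0$ for $j\neq k$. It therefore suffices to fix $k$, write $w_k\coloneqq\begin{bmatrix}u_k^{\top} & y_k^{\top}\end{bmatrix}^{\top}$, and show that \eqref{eq:system_k} holds for this $k$ if and only if $RH_k\phi_k+Y_{L,k}\phi_k=V_{0,k}\phi_k$ for all $\phi_k\in\Hs[0,\tau_k]$.

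The key computation is integration by parts. For $\phi\in\Hs[0,\tau_k]$ and $\ell\le L-1$, the function $w_k$ lies in $\mathrm{H}^{L-1}$, so $w_k^{(\ell)}$ is defined. Since $\phi^{(i)}$ vanishes at $0$ and $\tau_k$ for $i\le L-1$, integrating by parts $\ell$ times gives
\[
W_{\ell,k}\phi=(-1)^{\ell}\int_0^{\tau_k}\phi^{(\ell)}(t)w_k(t)\,dt=\int_0^{\tau_k}\phi(t)w_k^{(\ell)}(t)\,dt .
\]
The boundary terms drop because they involve $\phi^{(i)}(0)$ and $\phi^{(i)}(\tau_k)$ with $i\le\ell-1\le L-2$. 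In the same way $Y_{L,k}\phi=\int_0^{\tau_k}\phi(t)y_k^{(L)}(t)\,dt$, using $y_k\in\Hso$ and the vanishing of $\phi^{(i)}$ at both endpoints for $i\le L-1$. Hence
\[
(RH_k+Y_{L,k}-V_{0,k})\phi=\int_0^{\tau_k}\phi(t)\,e_k(t)\,dt,
\]
where $e_k\coloneqq y_k^{(L)}+R\,\mathrm{col}(u_k,y_k,\dots,u_k^{(L-1)},y_k^{(L-1)})-v_k\in\Ls([0,\tau_k];\mathbb{R}^p)$.

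With this identity, (i)$\Rightarrow$(ii) is immediate: (i) says $e_k=0$ a.e., so the integral vanishes for every $\phi$.

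For (ii)$\Rightarrow$(i), we know $\int_0^{\tau_k}\phi\,e_k\,dt=0$ for all $\phi\in\Hs[0,\tau_k]$, and we must conclude $e_k=0$ a.e. This density step is the main obstacle. The cleanest route is to observe that $\Hs[0,\tau_k]$ contains $C_c^\infty((0,\tau_k))$, which is dense in $\Ls[0,\tau_k]$; then, componentwise, $e_k$ is orthogonal to a dense set and so vanishes a.e. An alternative that uses only earlier results is Lemma~\ref{lem:orthogonal_comp}: write $\phi=\psi^{(L)}$ with $\psi\in\mathrm{H}_0^{2L}$-type functions. The direct density argument is simpler, and I would use it. Combining over $k$ yields the equivalence.
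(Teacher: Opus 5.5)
Your proposal is correct and follows essentially the same route as the paper. Both arguments integrate by parts to write $(RH_k+Y_{L,k}-V_{0,k})\phi_k$ as $\int_0^{\tau_k}\phi_k(t)e_k(t)\,dt$, read off (i)$\Rightarrow$(ii) immediately, and prove the converse by setting $\phi_j=0$ for $j\neq k$ and then using the fact that $\Hs[0,\tau_k]$, which contains $C_c^\infty((0,\tau_k))$, is rich enough to force $e_k=0$ almost everywhere. The only difference is that you justify this last step directly through density in $\Ls$, while the paper cites a standard property of test functions for the same fact.
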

	\begin{proof}
		Let 
		$R  \in \mathbb{R}^{p\times qL}$ be partitioned as in \eqref{eq:R_def}, and 
		let $\phi = (\phi_k)_{k=1}^K \in \Ht$ be arbitrary.
		For all $\ell=0,\dots,L-1$ and $k=1,\dots,K$,
		integrating by parts $\ell$ times yields
		\begin{equation*}
			W_{\ell,k} \phi_k = 
			\int_0^{\tau_k} \phi_k(t) 
			\begin{bmatrix}
				u_k^{(\ell)}(t) \\ y_k^{(\ell)}(t)
			\end{bmatrix}dt.
		\end{equation*}
		This implies that
		\begin{align}
			RH\phi & = 
			\sum_{\ell=0}^{L-1}
			\begin{bmatrix}
				R_{u,\ell} & R_{y,\ell }
			\end{bmatrix} 
			\left ( \sum_{k=1}^K 
			W_{\ell,k}\phi_k \right) \notag \\
			&=
			\sum_{k=1}^K 
			\int_0^{\tau_k} \phi_k(t)
			\sum_{\ell=0}^{L-1}
			\left(
			R_{u,\ell} u_k^{(\ell)}(t) + R_{y,\ell} y_k^{(\ell)}(t)
			\right)dt.
			\label{eq:RH}
		\end{align}
		It follows that
		$RH\phi +Y_{L}\phi = V_{0}\phi $ if and only if
		\begin{equation*}
			\sum_{k=1}^K 
			\int_0^{\tau_k} \phi_k (t)
			\left(y_k^{(L)}(t) + \sum_{\ell=0}^{L-1}
			\left(
			R_{u,\ell} u_k^{(\ell)}(t) + R_{y,\ell} y_k^{(\ell)}(t)
			\right) - v_k(t)\right) dt = 0.
		\end{equation*}
		From this, the implication (i) $\Rightarrow$ (ii)
		follows immediately.
		
		To prove the implication (ii) $\Rightarrow$ (i),
		let $k \in \{1,\dots,K \}$ be arbitrary, and take 
		$\phi_j = 0$ for $j\neq k$. 
		Then statement (ii) implies that
		\[
		\int_0^{\tau_k} \phi_k (t)
		\left(y_k^{(L)}(t) + \sum_{\ell=0}^{L-1}
		\left(
		R_{u,\ell} u_k^{(\ell)}(t) + R_{y,\ell} y_k^{(\ell)}(t)
		\right) - v_k(t)\right) dt = 0.
		\]
		Since $\phi_k \in \Hs[0,\tau_k]$ is arbitrary,
		a standard property of test functions, 
		as stated for example in \cite[Proposition 13.2.2]{Tucsnak2009},
		implies that
		\[
		y_k^{(L)}(t) + \sum_{\ell=0}^{L-1}
		\left(
		R_{u,\ell} u_k^{(\ell)}(t) + R_{y,\ell} y_k^{(\ell)}(t)
		\right) - v_k(t) = 0
		\]
		for a.e.~$t \in [0,\tau_k]$. Thus, the differential equations given in \eqref{eq:system_k} hold.
	\end{proof}
	
	Lemma~\ref{lem:data_consistency_cond} shows that
	it is reasonable to quantify the noise intensity in terms of the
	operator $V_0$.
	Noting that $V_0V_0^*$ is
	a $p\times p$ matrix,
	we define the noise class $\Delta_{\tau,\Theta}$ by
	\[
	\Delta_{\tau,\Theta} \coloneqq \{
	(v_k)_{k=1}^{K}\in \Delta_{\tau}: V_0V_0^* \leq \Theta
	\},
	\]
	where the matrix
	$\Theta \in \mathbb{S}^p$ with $\Theta \geq 0$
	represents the noise intensity.
	We assume that the noise-intensity matrix
	$\Theta $ is known.
	The matrix $V_0V_0^*$ can be decomposed as follows:
	\begin{equation}
		\label{eq:V0V0*_decomposition}
		V_0V_0^* = \sum_{k=1}^K V_{0,k}V_{0,k}^*.
	\end{equation}
	It follows that 
	the noise signals associated with different input-output trajectories contribute independently without cross terms under the constraint 
	$V_0V_0^* \leq \Theta$.
	We briefly discuss this noise constraint 
	in the special case 
	$\Theta = c I_p$ with $c\geq 0$.
	\begin{remark}
		Observe first that
		$V_0V_0^* \leq c I_p$ if and only if 
		\[
		\sum_{k=1}^K \|V_{0,k}^* b\|_{\Hs}^2 \leq c \|b\|_{\mathbb{R}^p}^2 \quad 
		\text{for all $b \in \mathbb{R}^p$}.
		\]
		Moreover, 
		\[
		\sum_{k=1}^K \|V_{0,k}^* b\|_{\Hs}^2 \leq 
		\left( \sum_{k=1}^K \|V_{0,k}^* \|^2 \right) \|b\|_{\mathbb{R}^p}^2
		=
		\left( \sum_{k=1}^K \|V_{0,k} \|^2 \right) \|b\|_{\mathbb{R}^p}^2
		\]
		for all $b \in \mathbb{R}^p$.
		We now estimate 
		the operator norm $\|V_{0,k} \|$
		for a fixed $k=1,\dots,K$.
		Define the integral operator $\mathcal{J}_k \in 
		\mathcal{L}(\Ls([0,\tau_k];\mathbb{R}^p)) $ by
		\[
		(\mathcal{J}_k f)(t) \coloneqq \int_0^t f(s)ds,\quad 
		f \in\Ls([0,\tau_k];\mathbb{R}^p).
		\]
		Integrating by parts $L$ times, we have
		\[
		V_{0,k} \phi_k = (-1)^L \int_0^{\tau_k} \phi_k^{(L)}(t) 
		(\mathcal{J}_k^Lv_k)(t)dt
		\]
		for all $\phi_k \in \Hs[0,\tau_k]$.  
		This implies that $\|V_{0,k}\| \leq \|\mathcal{J}_k^Lv_k\|_{\Ls}$.
		Since repeated integration
		attenuates the high-frequency components of 
		$v_k$, 
		the constraint $V_0V_0^*\leq cI_p$ is
		easier to satisfy when 
		the noise signals $(v_k)_{k=1}^K$ have little low-frequency content.
		See \cite[Proposition~A]{Wakaiki2025Cont}
		for a frequency-domain representation of 
		$\|V_{0}\|$ in the
		case $L=1$ and $K=1$.
		\hspace*{\fill} $\triangle$ 
	\end{remark}

	For data $\mathfrak{D} = (u_k,y_k)_{k=1}^{K} \in \Gamma_{\tau}$ and 
	a noise-intensity matrix $\Theta \in \mathbb{S}^p$ with $\Theta \geq 0$, we define 
	the set $\Sigma_{\mathfrak{D},\Theta}$ of systems by
	\[
	\Sigma_{\mathfrak{D},\Theta} \coloneqq \{
	R \in \mathbb{R}^{p\times qL}:
	\text{there exists $(v_k)_{k=1}^{K} \in \Delta_{\tau,\Theta}$ such that 
		\eqref{eq:system_k} holds}\}.
	\]
	Systems in $\Sigma_{\mathfrak{D},\Theta}$ are 
	consistent with the data $\mathfrak{D}$
	corrupted by noise in the class  $\Delta_{\tau,\Theta}$.
	To provide a characterization of  $\Sigma_{\mathfrak{D},\Theta}$
	in terms of a matrix inequality,
	we define $\Pi\in \mathbb{S}^{p+qL}$
	by
	\begin{equation}
		\label{eq:Pi_def}
		\Pi \coloneqq 
		\begin{bmatrix}
			\Theta - Y_LY_L^* & -Y_L H^* \\
			-H Y_L^* & -H H^*
		\end{bmatrix},
	\end{equation}
	where 
	the operators $H$ and 
	$Y_L$ are 
	defined by \eqref{eq:H_def} and 
	\eqref{eq:YL_def}, respectively.
	The block matrices
	$\Theta- Y_LY_L^* 
	\in \mathbb{S}^{p}$,
	$Y_LH^* = (HY_L^*)^{\top}
	\in \mathbb{R}^{p \times qL}$, and $HH^*\in \mathbb{S}^{qL}$ of $\Pi$
	can be written as 
	\begin{align}
		\Theta- Y_LY_L^* &= \Theta -  \sum_{k=1}^{K}
		Y_{L,k} Y_{L,k}^*, \label{eq:I_YLYL_decompose} \\
		Y_LH^* &= \sum_{k=1}^{K}
		\begin{bmatrix}
			Y_{L,k}W_{0,k}^* & \cdots &
			Y_{L,k}W_{L-1,k}^*
		\end{bmatrix}, \label{eq:YH_decompose}\\
		HH^* &=  \sum_{k=1}^{K}
		\begin{bmatrix}
			W_{0,k}W_{0,k}^*  & \cdots &
			W_{0,k}W_{L-1,k}^* \\
			\vdots & \ddots &  \vdots \\
			W_{L-1,k}W_{0,k}^* & \cdots & W_{L-1,k}W_{L-1,k}^* 
		\end{bmatrix}.\label{eq:HH_decompose}
	\end{align}
	Matrices, such as
	$Y_{L,k} Y_{L,k}^*$, $Y_{L,k}W_{\ell,k}^*$, and 
	$W_{j,k}W_{\ell,k}^*$,
	can be computed from the integral representation
	presented in Proposition~\ref{prop:adjoint}.
	It is worth noting that the matrix $\Pi$ contains cross terms between 
	the derivatives, but not between different trajectories.
	\begin{lemma}
		\label{lem:system_rep}
		Let $\mathfrak{D} = (u_k,y_k)_{k=1}^{K} \in \Gamma_{\tau}$ and let $\Theta \in \mathbb{S}^p$ with $\Theta \geq 0$.
		Define  $\Pi\in \mathbb{S}^{p+qL}$
		by \eqref{eq:Pi_def}.
		Then
		the following statements are equivalent
		for 
		all $R \in \mathbb{R}^{p\times qL}$:
		\begin{enumerate}
			\renewcommand{\labelenumi}{\textup{(\roman{enumi})}}
			\item 
			$R \in \Sigma_{\mathfrak{D},\Theta}$.
			\item 
			$\begin{bmatrix}
				I_p \\ R^{\top} 
			\end{bmatrix}^{\top}
			\Pi
			\begin{bmatrix}
				I_p \\ R^{\top} 
			\end{bmatrix} \geq  0$.
		\end{enumerate} 
	\end{lemma}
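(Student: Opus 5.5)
Expanding the quadratic form shows that statement (ii) reads
\[
\Theta - Y_LY_L^* - Y_LH^*R^{\top} - RHY_L^* - RHH^*R^{\top} = \Theta - (RH+Y_L)(RH+Y_L)^* \geq 0 .
\]
So (ii) is equivalent to $(RH+Y_L)(RH+Y_L)^* \leq \Theta$. The plan is to compare this with the definition of $\Sigma_{\mathfrak{D},\Theta}$ through Lemma~\ref{lem:data_consistency_cond}.

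\emph{(i) $\Rightarrow$ (ii).} If $R \in \Sigma_{\mathfrak{D},\Theta}$, pick $(v_k)_{k=1}^K \in \Delta_{\tau,\Theta}$ satisfying \eqref{eq:system_k}. Lemma~\ref{lem:data_consistency_cond} then gives $RH+Y_L=V_0$. Hence $(RH+Y_L)(RH+Y_L)^* = V_0V_0^* \leq \Theta$, which is (ii).

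\emph{(ii) $\Rightarrow$ (i).} Here we must construct a noise. Define
\[
v_k(t) \coloneqq y_k^{(L)}(t) + R\,\bigl[u_k(t)^{\top}\ y_k(t)^{\top}\ \cdots\ u_k^{(L-1)}(t)^{\top}\ y_k^{(L-1)}(t)^{\top}\bigr]^{\top}, \qquad t \in [0,\tau_k].
\]
These functions lie in $\Ls([0,\tau_k];\mathbb{R}^p)$ because $u_k \in \mathrm{H}^{L-1}$ and $y_k \in \Hso$, so $(v_k)_{k=1}^K \in \Delta_{\tau}$. By construction \eqref{eq:system_k} holds with this noise. Lemma~\ref{lem:data_consistency_cond} then yields $V_0 = RH+Y_L$, where $V_0$ is built from these $v_k$ via \eqref{eq:V0_def}. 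Therefore $V_0V_0^* = (RH+Y_L)(RH+Y_L)^* \leq \Theta$ by (ii), so $(v_k)_{k=1}^K \in \Delta_{\tau,\Theta}$ and $R \in \Sigma_{\mathfrak{D},\Theta}$.

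The only point needing care is the operator identity behind the expansion of the quadratic form. The products $Y_LY_L^*$, $Y_LH^*$, $HY_L^*$ and $HH^*$ are finite matrices, since the operators map into finite-dimensional spaces. Also $(RH)^* = H^*R^{\top}$ because $R$ is a matrix acting between Euclidean spaces. With these facts the block computation is routine, so I do not expect a genuine obstacle; everything rests on Lemma~\ref{lem:data_consistency_cond}.
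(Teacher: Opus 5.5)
Your proof is correct and follows the paper's argument: you expand the quadratic form to $\Theta-(RH+Y_L)(RH+Y_L)^*$, and for (ii)$\Rightarrow$(i) you take the residual $v_k$ as the noise and identify its synthesis operator with $RH+Y_L$. The only difference is that the paper checks this identification trajectory-wise, via $\sum_k (RH_k+Y_{L,k})(RH_k+Y_{L,k})^*$ and \eqref{eq:V0V0*_decomposition}, whereas you obtain $V_0=RH+Y_L$ in one step from Lemma~\ref{lem:data_consistency_cond}; this is a harmless streamlining.
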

	\begin{proof}
		Let 
		$R  \in \mathbb{R}^{p\times qL}$.
		By the definition \eqref{eq:Pi_def} of $\Pi$, we obtain
		\begin{equation}
			\label{eq:K_cRHY}
			\begin{bmatrix}
				I_p \\ R^{\top} 
			\end{bmatrix}^{\top}
			\Pi
			\begin{bmatrix}
				I_p \\ R^{\top} 
			\end{bmatrix} =
			\Theta  - (RH+Y_L) (RH+Y_L)^*.
		\end{equation}
		From this and Lemma~\ref{lem:data_consistency_cond}, 
		the implication (i) $\Rightarrow$ (ii) follows immediately.
		
		To prove the implication (ii) $\Rightarrow$ (i),
		first observe that
		\begin{equation}
			\label{eq:RHY_sum}
			(RH+Y_L) (RH+Y_L)^* =
			\sum_{k=1}^{K} (RH_k + Y_{L,k})(RH_k + Y_{L,k})^*,
		\end{equation}
		where the operator $H_k$ is defined by \eqref{eq:Hi_def}.
		By this and \eqref{eq:K_cRHY}, we obtain
		\begin{equation}
			\label{eq:RHY_sum_bound}
			\sum_{k=1}^{K} (RH_k + Y_{L,k})(RH_k + Y_{L,k})^*
			\leq  \Theta.
		\end{equation}
		The same calculation as in  \eqref{eq:RH}
		shows that
		\[
		(RH_k + Y_{L,k})\phi_k = 
		\int_0^{\tau_k} \phi_k(t)
		\left(y_k^{(L)}(t) + \sum_{\ell=0}^{L-1}
		\left(
		R_{u,\ell} u_k^{(\ell)}(t) + R_{y,\ell} y_k^{(\ell)}(t)
		\right) \right) dt
		\]
		for all $\phi_k \in \Hs[0,\tau_k]$ and $k=1,\dots,K$.
		It follows that 
		$RH_k+Y_{L,k}$ is the synthesis operator $V_{0,k}$ associated with
		\[
		v_k \coloneqq y_k^{(L)} + 
		\sum_{\ell=0}^{L-1}
		\left(
		R_{u,\ell} u_k^{(\ell)} + R_{y,\ell} y_k^{(\ell)}
		\right) \in \Ls([0,\tau_k];\mathbb{R}^p)
		\]
		for each $k=1,\dots,K$.
		Moreover,
		\eqref{eq:V0V0*_decomposition} and \eqref{eq:RHY_sum_bound} give
		$(v_k)_{k=1}^K \in \Delta_{\tau,\Theta}$.
		Since 
		\[
		(RH+Y_L)\phi  = \sum_{k=1}^K 
		(RH_k+Y_{L,k})\phi_k =  \sum_{k=1}^K V_{0,k}\phi_k  = V_0 \phi
		\]
		for all $\phi = (\phi_k)_{k=1}^K \in \Ht$,
		Lemma~\ref{lem:data_consistency_cond}
		shows that 
		statement~(i) holds.
	\end{proof}
	
	\begin{remark}
		The role of continuous-time data in the proposed method 
		is to compute
		the matrix $\Pi$ through 
		the integral representation in
		Proposition~\ref{prop:adjoint}.
		Hence, our framework is not restricted to the
		ideal case where
		continuous-time data are 
		directly available.
		Even when measurements 
		are obtained only at sampling instants, 
		the
		proposed method remains practically
		implementable, 
		provided that 
		the sampling period is
		small enough for the integrals to be
		approximated accurately by numerical integration.
		\hspace*{\fill} $\triangle$ 
	\end{remark}

	\section{Data-driven stabilization}
	\label{sec:stabilization}
	In this section, we state
	our main result on data-driven stabilization
	of continuous-time systems.
	We begin in
	Section~\ref{sec:closed_loop_stability}
	by presenting a necessary
	and sufficient condition for model-based 
	stabilization within the behavioral framework.
	Using this condition,
	we 
	introduce and characterize the concept of 
	data informativity for quadratic stabilization
	in Section~\ref{sec:informativity_QS}.
	\subsection{Closed-loop system}
	\label{sec:closed_loop_stability}
	For feedback stabilization of 
	the system~\eqref{eq:system_original},
	we consider the controller
	\begin{equation}
		\label{eq:controller}
		u^{(L)}(t) + \sum_{\ell=0}^{L-1}
		\left(
		C_{u,\ell} u^{(\ell)}(t)  + C_{y,\ell} y^{(\ell)}(t) 
		\right)
		= 0,\quad t \geq 0,
	\end{equation}
	where $C_{u,\ell} \in \mathbb{R}^{m \times m}$
	and $C_{y,\ell} \in \mathbb{R}^{m\times p}$
	for $\ell=0,\dots,L-1$.
	At time $t$,
	this controller receives $y(t)$ from the system \eqref{eq:system_original}
	and then applies $u(t)$ to it.
	Define the matrix $C \in  \mathbb{R}^{m\times qL}$
	by
	\[
	C \coloneqq 
	\begin{bmatrix}
		C_{u,0} & C_{y,0} & C_{u,1} & C_{y,1} & \cdots & C_{u,L-1} & C_{y,L-1}
	\end{bmatrix}.
	\]
	As in the case of the system \eqref{eq:system_original}, we 
	identify the matrix $C$ with the controller \eqref{eq:controller}.
	We define the $q \times q$ polynomial matrix $P$ by
	\[
	P(\xi) \coloneqq 
	I_{q} \xi^L + 
	\sum_{\ell=0}^{L-1}
	\begin{bmatrix}
		C_{u,\ell} & C_{y,\ell} \\
		R_{u,\ell} & R_{y,\ell}
	\end{bmatrix} \xi^{\ell}.
	\]
	In the noise-free case,
	the closed-loop system 
	obtained by interconnecting the system  \eqref{eq:system_original} and 
	the controller \eqref{eq:controller}
	can be written as 
	\begin{equation}
		\label{eq:closed_loop}
		P\left(
		\frac{d}{dt}
		\right)
		w =0,\quad 
		\text{where~}
		w \coloneqq 
		\begin{bmatrix}
			u \\ y
		\end{bmatrix}.
	\end{equation}
	We define the behavior $\mathcal{B}$ of the closed-loop 
	system by
	\begin{equation}
		\label{eq:closed_loop_behavior}
		\mathcal{B} \coloneqq 
		\left\{
		w \in \Cinf(\mathbb{R};\mathbb{R}^q):
		P\left(
		\frac{d}{dt}
		\right)
		w =0
		\right\}.
	\end{equation}

	\begin{definition}
		We say that 
		the controller $C \in \mathbb{R}^{m\times qL}$ {\em stabilizes} the system $R \in \mathbb{R}^{p\times qL}$ 
		if $\lim_{t\to \infty} w(t) = 0$
		for all $w \in \mathcal{B}$.
	\end{definition}

	Necessary and sufficient LMI conditions 
	for stability were
	derived from the behavioral
	viewpoint
	in \cite{Kaneko1998,Cotroneo2000}.
	Here we present a simpler condition, using
	the property that
	the leading coefficient matrix of $P$ is the identity matrix.
	Define $J_{q(L-1)} \in \mathbb{R}^{q(L-1)\times qL}$ by
	\begin{equation}
		\label{eq:J_def}
		J_{q(L-1)} \coloneqq 
		\begin{bmatrix}
			0_{q(L-1) \times q} & I_{q(L-1)}
		\end{bmatrix}.
	\end{equation}
	Since the proof of the following lemma 
	relies on behavioral theory and
	quadratic differential forms, we defer it, together with the required
	background, to Appendix~\ref{sec:Proof_of_Lemma}.
	\begin{lemma}
		\label{lem:QMI}
		The controller $C\in \mathbb{R}^{m\times qL}$ stabilizes the system $R\in  \mathbb{R}^{p\times qL}$  if and only if
		there exists a matrix
		$\Psi \in \mathbb{S}^{qL}$ such that
		$\Psi > 0$ and 
		\begin{equation}
			\label{eq:LMI_stabilization}
			\begin{bmatrix}
				-J_{q(L-1)} \\ C \\ R
			\end{bmatrix}^{\top}
			\Psi + \Psi 
			\begin{bmatrix}
				-J_{q(L-1)} \\ C \\ R
			\end{bmatrix} > 0,
		\end{equation}
		where the matrix 
		$J_{q(L-1)}$ is
		as in \eqref{eq:J_def}. 
	\end{lemma}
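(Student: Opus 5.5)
The plan is to reduce the behavioral statement to a standard state-space Lyapunov criterion through the companion (first-order) realization of $P(d/dt)w=0$. Because the leading coefficient of $P$ is $I_q$, the closed-loop equation is a regular higher-order ODE. Its natural state is
\[
x \coloneqq \begin{bmatrix} w^{\top} & (w')^{\top} & \cdots & (w^{(L-1)})^{\top}\end{bmatrix}^{\top}\in\mathbb{R}^{qL}.
\]
The column ordering of $C$ and $R$ (blocks $C_{u,\ell},C_{y,\ell}$ acting on $u^{(\ell)},y^{(\ell)}$) matches this stacking exactly, so $P(d/dt)w=0$ reads $w^{(L)}=-\begin{bmatrix} C\\ R\end{bmatrix}x$. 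Together with the shift relations $\frac{d}{dt}w^{(\ell)}=w^{(\ell+1)}$, this gives $x'=Ax$ with
\[
A \coloneqq \begin{bmatrix} J_{q(L-1)} \\ -C \\ -R\end{bmatrix}.
\]
The matrix appearing in \eqref{eq:LMI_stabilization} is therefore exactly $-A$.

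First I establish a bijection between $\mathcal{B}$ and the solutions of $x'=Ax$. For $w\in\mathcal{B}$, the stacked vector $x$ satisfies $x'=Ax$, so $x(t)=e^{At}x(0)$. Conversely, for any $x_0\in\mathbb{R}^{qL}$ the first $q$-block of $e^{At}x_0$ is a $\Cinf$ function $w$. The shift structure of $A$ forces the remaining blocks to be $w',\dots,w^{(L-1)}$, and the last block row then gives $P(d/dt)w=0$, so $w\in\mathcal{B}$.

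Next I show that $C$ stabilizes $R$ if and only if $A$ is Hurwitz. If $A$ is Hurwitz, every $x(t)\to0$, hence every $w\in\mathcal{B}$ tends to $0$. For the converse, suppose $A$ has an eigenvalue $\lambda$ with $\operatorname{Re}\lambda\ge 0$ and eigenvector $v$. From the companion structure, $v=(v_0,\lambda v_0,\dots,\lambda^{L-1}v_0)$, and $v_0\neq0$ since otherwise $v=0$. The complex solution $e^{\lambda t}v_0$ then solves $P(d/dt)w=0$. Because $P$ has real coefficients, its real and imaginary parts lie in $\mathcal{B}$. At least one of them does not tend to zero, since $|e^{\lambda t}|\ge1$ and $v_0\neq 0$. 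If $\lambda$ is not real, a short argument with $\operatorname{Re}(e^{\lambda t}v_0)$ and $\operatorname{Im}(e^{\lambda t}v_0)$ handles this: both cannot tend to zero, because their sum of squares has modulus bounded below.

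Finally, I apply the Lyapunov theorem. $A$ is Hurwitz if and only if there exists $\Psi\in\mathbb{S}^{qL}$ with $\Psi>0$ and $A^{\top}\Psi+\Psi A<0$, which is equivalent to $(-A)^{\top}\Psi+\Psi(-A)>0$, i.e., exactly \eqref{eq:LMI_stabilization}. The only delicate points are the necessity direction, namely building a non-decaying element of $\mathcal{B}$ from an unstable eigenvalue, and checking that the block ordering of $C$ and $R$ matches the state stacking. Both are routine once the companion form is written down. This direct route avoids the quadratic-differential-form machinery, which would be needed only if one wanted to phrase $\Psi$ as a QDF Lyapunov function on $\mathcal{B}$.
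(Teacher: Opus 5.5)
Your proof is correct, but it takes a different route from the paper. The paper uses the same companion identity only to evaluate a quadratic differential form. It regards $\widetilde\Psi$ as the block coefficient matrix of a QDF $\Psi$ of degree $L-1$. With your companion matrix $A$, it shows in Lemma~\ref{lem:Lyap_ineq} that $Q_{\overset{\bullet}{\Psi}}(w)=x^{\top}(A^{\top}\widetilde\Psi+\widetilde\Psi A)x$ along $\mathcal{B}$. It then obtains both directions from the behavioral Lyapunov theorems collected in Theorem~\ref{thm:QDF}; for necessity, the choice of $\Psi_0$ in \eqref{eq:Psi0} yields $A^{\top}\widetilde\Psi+\widetilde\Psi A=-I_{qL}$.

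You instead prove directly that stability of $\mathcal{B}$ is equivalent to $A$ being Hurwitz, and then cite the classical matrix Lyapunov theorem. Your intermediate steps hold up. The eigenvector has the form $(v_0,\lambda v_0,\dots,\lambda^{L-1}v_0)$ with $v_0\neq 0$, and its last block row gives $P(\lambda)v_0=0$. Since $|e^{\lambda t}v_0|\geq |v_0|$ for $t\geq 0$, the real or imaginary part of $e^{\lambda t}v_0$ is a non-decaying element of $\mathcal{B}$. The case $L=1$ degenerates correctly. Alternatively, $\mathcal{B}$ is closed under differentiation, so decay of every $w\in\mathcal{B}$ already forces $x(t)\to 0$ for every initial state.

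Your route is self-contained and needs only linear ODE facts. It also makes transparent that \eqref{eq:LMI_stabilization} is just the Lyapunov inequality for the companion matrix, which is available precisely because the leading coefficient of $P$ is $I_q$. The paper's route relies on nontrivial external results. In exchange, it interprets $\widetilde\Psi$ as a QDF Lyapunov function on the closed-loop behavior, keeping the argument inside the behavioral framework of its discrete-time counterpart and of the cited behavioral LMI stability tests.
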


	\subsection{Informativity for quadratic stabilization}
	\label{sec:informativity_QS}
	We now introduce the notion of 
	data informativity for quadratic stabilization 
	based on 
	the necessary and sufficient condition for stabilization given in Lemma~\ref{lem:QMI}.
	The following definition 
	is a continuous-time analogue of 
	\cite[Definition~10]{Waarde2023TAC}.
	\begin{definition}
		Given a noise-intensity matrix
		$\Theta \in \mathbb{S}^p$ with
		$\Theta \geq 0$,
		the data $\mathfrak{D} = (u_k,y_k)_{k=1}^{K} \in \Gamma_{\tau}$
		are called {\em informative for quadratic stabilization 
			under the noise class $\Delta_{\tau, \Theta}$} if
		there exist matrices $\Psi \in \mathbb{S}^{qL}$ and 
		$C \in \mathbb{R}^{m\times qL}$ such that 
		$\Psi > 0$ and
		the inequality \eqref{eq:LMI_stabilization} holds
		for all $R \in \Sigma_{\mathfrak{D},\Theta}$.
	\end{definition}

	Let $\mathcal{M},\mathcal{N}  \in 
	\mathbb{S}^{q+r}$  be
	partitioned as 
	\begin{equation}
		\label{eq:MN_partition}
		\mathcal{M} =
		\begin{bmatrix}
			\mathcal{M}_{11} & \mathcal{M}_{12} \\
			\mathcal{M}_{12}^{\top} & \mathcal{M}_{22}
		\end{bmatrix}\quad \text{and}
		\quad 
		\mathcal{N} =
		\begin{bmatrix}
			\mathcal{N}_{11} & \mathcal{N}_{12} \\
			\mathcal{N}_{12}^{\top} & \mathcal{N}_{22}
		\end{bmatrix},
	\end{equation}
	where 
	the $(1,1)$-blocks have size $q \times q$ and 
	the $(2,2)$-blocks have size $r \times r$.
	Define
	the matrix sets $\mathcal{Z}_{q,r}(\mathcal{N})$
	and $\mathcal{Z}_{q,r}^+(\mathcal{M})$
	by
	\begin{align}
		\label{eq:ZN_def}
		\mathcal{Z}_{q,r}(\mathcal{N}) &\coloneqq 
		\left\{
		Z \in \mathbb{R}^{r\times q} :
		\begin{bmatrix}
			I_q \\ Z
		\end{bmatrix}^{\top}
		\mathcal{N}
		\begin{bmatrix}
			I_q \\Z
		\end{bmatrix} \geq  0
		\right\}, \\
		\label{eq:ZM_def}
		\mathcal{Z}_{q,r}^+(\mathcal{M}) &\coloneqq 
		\left\{
		Z \in \mathbb{R}^{r\times q} :
		\begin{bmatrix}
			I_q \\ Z
		\end{bmatrix}^{\top}
		\mathcal{M}
		\begin{bmatrix}
			I_q \\ Z
		\end{bmatrix}> 0
		\right\}.
	\end{align}
	The following result is called the {\em strict matrix $S$-lemma}
	and 
	is is useful for studying data informativity.
	See \cite[Theorem~4.10]{Waarde2023SIAM} for the proof.
	\begin{lemma}
		\label{lem:S_lemma}
		Let $\mathcal{M},\mathcal{N} \in 
		\mathbb{S}^{q+r}$ be partitioned as in \eqref{eq:MN_partition}. Assume that $\mathcal{N}$ satisfies
		\begin{equation}
			\label{eq:S_Lemma_cond}
			\mathcal{N}_{22} < 0,\quad 
			\Ker  \mathcal{N}_{22} \subseteq
			\Ker  \mathcal{N}_{12},\quad \text{and}
			\quad 
			\mathcal{N}_{11} - 
			\mathcal{N}_{12}\mathcal{N}_{22}^{-1}\mathcal{N}_{12}^{\top}  \geq  0.
		\end{equation}
		Then $\mathcal{Z}_{q,r}(\mathcal{N}) \subseteq
		\mathcal{Z}_{q,r}^{+}(\mathcal{M})$
		if and only if 
		there exists a scalar $\alpha \geq 0$
		such that
		$\mathcal{M} - \alpha \mathcal{N} > 0$.
	\end{lemma}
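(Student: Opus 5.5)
The ``if'' direction is immediate, and for the ``only if'' direction I plan to reduce the matrix statement to the classical (vector) S-lemma for two quadratic forms. Note that under $\mathcal{N}_{22}<0$ the kernel inclusion in \eqref{eq:S_Lemma_cond} is automatic.

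\emph{Sufficiency.} Suppose $\mathcal{M}-\alpha\mathcal{N}>0$ with $\alpha\ge 0$, and let $Z\in\mathcal{Z}_{q,r}(\mathcal{N})$. The matrix $\begin{bmatrix} I_q\\ Z\end{bmatrix}$ has full column rank. Hence
\[
\begin{bmatrix} I_q\\ Z\end{bmatrix}^{\top}\mathcal{M}\begin{bmatrix} I_q\\ Z\end{bmatrix}
> \alpha \begin{bmatrix} I_q\\ Z\end{bmatrix}^{\top}\mathcal{N}\begin{bmatrix} I_q\\ Z\end{bmatrix}\ge 0,
\]
so $Z\in\mathcal{Z}^+_{q,r}(\mathcal{M})$.

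\emph{Normalization.} I first remove the off-diagonal block of $\mathcal{N}$ by the congruence $T=\begin{bmatrix} I_q & 0\\ -\mathcal{N}_{22}^{-1}\mathcal{N}_{12}^{\top} & I_r\end{bmatrix}$. The substitution $Z=-\mathcal{N}_{22}^{-1}\mathcal{N}_{12}^{\top}+\widehat Z$ satisfies $\begin{bmatrix} I_q\\ Z\end{bmatrix}=T\begin{bmatrix} I_q\\ \widehat Z\end{bmatrix}$, and both the hypothesis and the conclusion are invariant under replacing $(\mathcal{M},\mathcal{N})$ by $(T^\top\mathcal{M}T,T^\top\mathcal{N}T)$. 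So I may assume $\mathcal{N}=\mathrm{diag}(P,-Q)$ with $P\coloneqq\mathcal{N}_{11}-\mathcal{N}_{12}\mathcal{N}_{22}^{-1}\mathcal{N}_{12}^\top\ge 0$ and $Q\coloneqq-\mathcal{N}_{22}>0$. Then $\mathcal{Z}_{q,r}(\mathcal{N})=\{Z: Z^\top QZ\le P\}$.

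\emph{Lifting to vectors (key step).} I will show that $\sigma_{\mathcal{M}}(v)\coloneqq v^\top\mathcal{M}v>0$ for every nonzero $v=(x,w)\in\mathbb{R}^q\times\mathbb{R}^r$ with $\sigma_{\mathcal{N}}(v)=x^\top Px-w^\top Qw\ge0$.
\begin{itemize}
\item If $x^\top Px>0$, set $Z\coloneqq w\,x^\top P/(x^\top Px)$. Then $Zx=w$, and by Cauchy--Schwarz,
\[
Z^\top QZ=\frac{(w^\top Qw)\,Pxx^\top P}{(x^\top Px)^2}\le \frac{Pxx^\top P}{x^\top Px}\le P .
\]
Thus $Z\in\mathcal{Z}_{q,r}(\mathcal{N})$, and $v^\top\mathcal{M}v=x^\top\bigl(\begin{bmatrix} I\\ Z\end{bmatrix}^\top\mathcal{M}\begin{bmatrix} I\\ Z\end{bmatrix}\bigr)x>0$ because $x\neq0$.
\item If $x^\top Px=0$, then $w^\top Qw\le 0$, so $w=0$ since $Q>0$. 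Hence $x\ne0$, and taking $Z=0\in\mathcal{Z}_{q,r}(\mathcal{N})$ gives $x^\top\mathcal{M}_{11}x>0$.
\end{itemize}
This establishes strict copositivity of $\mathcal{M}$ on the cone $\{\sigma_{\mathcal{N}}\ge0\}$.

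\emph{Multiplier.} If $P\neq0$, some $v$ has $\sigma_{\mathcal{N}}(v)>0$ (take $w=0$), which is a Slater point. I then invoke the classical strict S-lemma, proved via Dines' convexity of the joint range $\{(\sigma_{\mathcal{M}}(v),\sigma_{\mathcal{N}}(v))\}$ restricted to the unit sphere together with a separating hyperplane. It yields $\alpha\ge0$ with $\sigma_{\mathcal{M}}-\alpha\sigma_{\mathcal{N}}>0$ on $v\neq0$, i.e.\ $\mathcal{M}-\alpha\mathcal{N}>0$. If $P=0$, the cone is $\{w=0\}$ and the lifting step gives $\mathcal{M}_{11}>0$. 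Then
\[
\mathcal{M}-\alpha\mathcal{N}=\begin{bmatrix}\mathcal{M}_{11}&\mathcal{M}_{12}\\ \mathcal{M}_{12}^\top&\mathcal{M}_{22}+\alpha Q\end{bmatrix}>0
\]
for $\alpha$ large by a Schur complement argument.

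The main obstacle is the strictness in the vector S-lemma step. Compactness of the sphere is needed to turn pointwise positivity into a uniform margin, so that the separating line can be tilted to produce a strict inequality. I would handle this by minimizing $\sigma_{\mathcal{M}}$ over the compact set $\{\|v\|=1,\ \sigma_{\mathcal{N}}(v)\ge0\}$ to get a margin $\varepsilon>0$ before separating.
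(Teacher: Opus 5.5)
Your proof is correct. Its route differs from the paper only in that the paper gives no argument here: it quotes \cite[Theorem~4.10]{Waarde2023SIAM}. That reference covers the broader class $\mathcal{N}_{22}\le 0$, where the kernel inclusion is a real hypothesis and the solution set can be unbounded. In that generality $\mathcal{M}-\alpha\mathcal{N}>0$ cannot be expected: take $q=r=1$ and $\mathcal{N}=\mathcal{M}=\mathrm{diag}(1,0)$.

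You instead prove exactly the special case $\mathcal{N}_{22}<0$ used in the paper, in three steps:
\begin{enumerate}
\item the congruence by $T$, which brings $\mathcal{N}$ to $\mathrm{diag}(P,-Q)$;
\item the rank-one lifting $Z=wx^\top P/(x^\top Px)$, which turns the matrix inclusion into strict copositivity of $\mathcal{M}$ on $\{\sigma_{\mathcal N}\ge 0\}$;
\item the classical strict S-lemma, with the non-Slater case $P=0$ handled by a Schur complement.
\end{enumerate}
Your proof also makes visible where $\mathcal{N}_{22}<0$ enters. Via $Q>0$, it forces $w=0$ when $x^\top Px=0$, and it makes $\mathcal{M}-\alpha\mathcal{N}$ definite for large $\alpha$ when $P=0$. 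This is precisely what permits the stronger conclusion. The citation buys generality; your version buys a self-contained, elementary argument that needs none of the QMI machinery of the reference.

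One justification you give is inaccurate, though it does not break the proof. Dines' theorem asserts convexity of the joint range $\{(\sigma_{\mathcal M}(v),\sigma_{\mathcal N}(v)) : v\in\mathbb{R}^{q+r}\}$ over the whole space. The image of the unit sphere is guaranteed to be convex only when $q+r\ge 3$ (Brickman's theorem). For $q=r=1$ it is in general the boundary of an ellipse, so separating on the sphere is not available.

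Your margin idea repairs this without the sphere:
\begin{itemize}
\item Set $\varepsilon\coloneqq\min\{\sigma_{\mathcal M}(v): \|v\|=1,\ \sigma_{\mathcal N}(v)\ge 0\}>0$. Then $\sigma_{\mathcal N}(v)\ge 0$ implies $\sigma_{\mathcal M-\varepsilon I}(v)\ge 0$.
\item Separate the convex cone $\{(\sigma_{\mathcal M-\varepsilon I}(v),\sigma_{\mathcal N}(v))\}$ (Dines) from $\{(a,b): a<0,\ b>0\}$.
\item Use your Slater point $(x,0)$ with $x^\top Px>0$ to rule out a vertical separating line.
\end{itemize}
This yields $\alpha\ge 0$ with $\mathcal M-\varepsilon I-\alpha\mathcal N\ge 0$, i.e.\ $\mathcal M-\alpha\mathcal N\ge\varepsilon I>0$.
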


	We assume that
	the input-output data $(u_k,y_k)_{k=1}^{K} \in \Gamma_{\tau}$ are generated by
	an unknown true system in the presence of noise in the class
	$\Delta_{\tau, \Theta}$, where the
	noise-intensity matrix
	$\Theta \in \mathbb{S}^p$ with
	$\Theta \geq 0$ is known.
	\begin{assumption}
		\label{assump:true_system}
		There exist a system
		$R_s \in \mathbb{R}^{p\times qL}$
		and a noise sequence $(v_{s,k})_{k=1}^{K}\in \Delta_{\tau, \Theta}$ 
		such that
		\[
		y_k^{(L)}(t) + R_s
		\begin{bmatrix}
			u_k(t) \\ y_k(t)  \\ \vdots \\ u_k^{(L-1)}(t) \\ y_k^{(L-1)}(t)
		\end{bmatrix} = v_{s,k}(t)
		\]
		holds for a.e.~$t \in[0,\tau_k]$ and all $k=1,\dots, K$.
	\end{assumption}

	We make an assumption on the data-embedded 
	operator $H$ introduced in Section~\ref{sec:characterization_data_consistency}. 
	Recall that 
	$H$ is a continuous-time counterpart of
	a data Hankel matrix used in discrete-time data-driven control.
	\begin{assumption}
		\label{assump:surjectivity}
		The matrix $HH^* \in \mathbb{R}^{qL\times qL}$
		given in \eqref{eq:HH_decompose} is invertible.
	\end{assumption}
	
	We can numerically 
	check whether Assumption~\ref{assump:surjectivity}
	is satisfied, using Proposition~\ref{prop:adjoint} and
	\eqref{eq:HH_decompose}.
	In Section~\ref{sec:surjectivity},
	we will see that in the noise-free setting,
	this invertibility property
	is equivalent to data informativity for system identification.
	
	We define
	$J_{p}\in \mathbb{R}^{p \times qL}$
	and 
	$\mathcal{N}\in \mathbb{S}^{2qL}$
	by 
	\begin{align}
		\label{eq:Jp_def} 
		J_p &\coloneqq 
		\begin{bmatrix}
			0_{p\times (qL-p)} & I_p
		\end{bmatrix}, \\
		\label{eq:calN_def} 
		\mathcal{N} &\coloneqq
		\begin{bmatrix}
			J_p & 0_{p \times qL}  \\
			0_{qL \times qL} & I_{qL}
		\end{bmatrix}^{\top}
		\Pi
		\begin{bmatrix}
			J_p & 0_{p \times qL} \\
			0_{qL \times qL} & I_{qL}
		\end{bmatrix},
	\end{align}
	where the matrix $\Pi $
	is as in \eqref{eq:Pi_def}.
	To apply Lemma~\ref{lem:S_lemma}, we have to 
	prove that the matrix
	$\mathcal{N}$ satisfies
	the conditions given in
	\eqref{eq:S_Lemma_cond}.
	\begin{lemma}
		\label{lem:cond_check_for_Slemma}
		Suppose that Assumptions~\ref{assump:true_system}
		and \ref{assump:surjectivity} hold for 
		the data $\mathfrak{D} = (u_k,y_k)_{k=1}^{K}
		\in \Gamma_{\tau}$
		and
		the noise-intensity matrix
		$\Theta \in \mathbb{S}^p$ with
		$\Theta\geq 0$. Define
		$\mathcal{N}\in \mathbb{S}^{2qL}$ 
		by \eqref{eq:calN_def} and partition it as in
		\eqref{eq:MN_partition}, where the diagonal blocks
		$\mathcal{N}_{11}$ and $\mathcal{N}_{22}$
		have size $qL\times qL$.
		Then $\mathcal{N}$ satisfies the conditions
		given in \eqref{eq:S_Lemma_cond}.
	\end{lemma}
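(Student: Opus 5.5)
First compute the blocks of $\mathcal{N}$. With $E := \begin{bmatrix} J_p & 0 \\ 0 & I_{qL}\end{bmatrix}$ and $\Pi$ as in \eqref{eq:Pi_def},
\[
\mathcal{N}_{11} = J_p^{\top}(\Theta - Y_LY_L^*)J_p,\qquad
\mathcal{N}_{12} = -J_p^{\top} Y_L H^*,\qquad
\mathcal{N}_{22} = -HH^*.
\]
Under Assumption~\ref{assump:surjectivity}, $HH^*$ is an invertible $qL\times qL$ matrix. It is also nonnegative, since $b^{\top}HH^*b = \|H^*b\|^2_{\Ht}$. Hence $HH^*>0$ and $\mathcal{N}_{22}<0$. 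The second condition, $\Ker\mathcal{N}_{22}\subseteq\Ker\mathcal{N}_{12}$, is then trivial because $\Ker \mathcal{N}_{22}=\{0\}$.

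The main step is the Schur complement condition. We have
\[
\mathcal{N}_{11}-\mathcal{N}_{12}\mathcal{N}_{22}^{-1}\mathcal{N}_{12}^{\top}
= J_p^{\top}\bigl(\Theta - Y_LY_L^* + Y_LH^*(HH^*)^{-1}HY_L^*\bigr)J_p .
\]
Since congruence by $J_p$ preserves nonnegativity, it suffices to show that the $p\times p$ matrix
\[
\Theta - Y_L\bigl(I - H^*(HH^*)^{-1}H\bigr)Y_L^*
\]
is nonnegative.

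Here Assumption~\ref{assump:true_system} is used. By Lemma~\ref{lem:data_consistency_cond}, the true system gives $R_sH + Y_L = V_{s}$, where $V_s$ is the synthesis operator of $(v_{s,k})$ and $V_sV_s^*\le\Theta$. Let $\mathcal{P}:=I-H^*(HH^*)^{-1}H$. This is the orthogonal projection on $\Ht$ onto $\Ker H$: it is selfadjoint and idempotent, and it vanishes on $\Ran H^*$. Since $H\mathcal{P}=0$, we get $Y_L\mathcal{P} = (V_s - R_sH)\mathcal{P} = V_s\mathcal{P}$. Therefore
\[
Y_L\mathcal{P}Y_L^* = Y_L\mathcal{P}\mathcal{P}^*Y_L^* = V_s\mathcal{P}V_s^* \le V_sV_s^* \le \Theta,
\]
where the first inequality uses $0\le\mathcal{P}\le I$.

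All operators involved map into or out of finite-dimensional spaces, so the compositions are bona fide matrices and no closedness issues arise. The expected obstacle is only the careful bookkeeping of the block structure induced by $J_p$. In particular, $\mathcal{N}_{11}$ is $qL\times qL$ but has rank at most $p$, so nonnegativity must be transported through the congruence rather than read off as definiteness.
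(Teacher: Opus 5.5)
Your proof is correct, but it reaches the key inequality by a different route from the paper. The paper stays at the matrix level. It uses the completion-of-squares identity \eqref{eq:Pi22_rep} at $Z=R_s^{\top}$, then shows the first term is nonnegative via Lemma~\ref{lem:system_rep} and the second via $\Pi_{22}=-HH^*\le 0$. You instead work in $\Ht$: you write the Schur complement (up to the $J_p$ padding) as $\Theta-Y_L\mathcal{P}Y_L^*$, with $\mathcal{P}$ the orthogonal projection onto $\Ker H$, and then use $Y_L\mathcal{P}=V_s\mathcal{P}$ (from Lemma~\ref{lem:data_consistency_cond}) and $\mathcal{P}\le I$.

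Unwinding the paper's identity at $Z=R_s^{\top}$ shows its two terms are exactly $\Theta-V_sV_s^*$ and $V_s(I-\mathcal{P})V_s^*$. So both arguments rest on the same splitting $V_sV_s^*=V_s\mathcal{P}V_s^*+V_s(I-\mathcal{P})V_s^*$, seen from two sides.

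The paper's version is generic. It only needs one point of $\mathcal{Z}_{p,qL}(\Pi)$ and $\Pi_{22}\le 0$, never uses that $\Pi$ is built from Gram-type products, and reuses Lemma~\ref{lem:system_rep} directly. Your version exploits that Gram structure and gives an exact, interpretable formula: the Schur complement is $\Theta$ minus the Gram matrix of the noise operator restricted to $\Ker H$. It also shows that this component, $V_s\mathcal{P}=Y_L\mathcal{P}$, is determined by the data alone, i.e.\ it is the same for every consistent system.
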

	\begin{proof}
		Since $\mathcal{N}_{22} = 
		-H H^*$, 
		it follows that $\mathcal{N}_{22} < 0$ by 
		Assumption~\ref{assump:surjectivity}.
		This also yields
		\[
		\Ker  \mathcal{N}_{22} = \{ 0\} \subseteq
		\Ker  \mathcal{N}_{12}.
		\]
		If we define 
		\[
		\Pi_{11} \coloneqq  \Theta - Y_LY_L^*,\quad 
		\Pi_{12} \coloneqq -Y_LH^*,\quad \text{and} \quad  
		\Pi_{22} \coloneqq -HH^*, 
		\]
		then
		\begin{equation}
			\label{eq:N_Pi_relation22}
			\mathcal{N}_{11} - 
			\mathcal{N}_{12}\mathcal{N}_{22}^{-1}\mathcal{N}_{12}^{\top} =
			\begin{bmatrix}
				0_{(qL-p) \times (qL-p) }  & 0_{(qL-p)\times p} \\
				0_{p \times (qL-p)} & \Pi_{11} - \Pi_{12}
				\Pi_{22}^{-1} \Pi_{12}^{\top}
			\end{bmatrix}.
		\end{equation}
		By \cite[Fact 8.9.7]{Bernstein2018},
		\begin{equation}
			\label{eq:Pi22_rep}
			\Pi_{11} - 
			\Pi_{12}\Pi_{22}^{-1}\Pi_{12}^{\top}  
			=
			\begin{bmatrix}
				I_p \\ Z
			\end{bmatrix}^{\top}
			\Pi
			\begin{bmatrix}
				I_p \\ Z
			\end{bmatrix} -
			(Z+\Pi_{22}^{-1}\Pi_{12}^{\top})^{\top} 
			\Pi_{22} (Z+\Pi_{22}^{-1}\Pi_{12}^{\top})
		\end{equation}
		for all $Z \in \mathbb{R}^{qL\times p}$.
		Using Assumption~\ref{assump:true_system} and Lemma~\ref{lem:system_rep}, we obtain
		\begin{equation}
			\label{eq:Rs_Pi}
			\begin{bmatrix}
				I_p \\ R_s^{\top} 
			\end{bmatrix}^{\top}
			\Pi
			\begin{bmatrix}
				I_p \\ R_s^{\top} 
			\end{bmatrix} \geq 
			0.
		\end{equation}
		From 
		\eqref{eq:N_Pi_relation22}--\eqref{eq:Rs_Pi},
		we conclude that
		$\mathcal{N}_{11} - 
		\mathcal{N}_{12}\mathcal{N}_{22}^{-1}\mathcal{N}_{12}^{\top}
		\geq  0$.
	\end{proof}
	
	We are now ready to characterize 
	data informativity for quadratic stabilization.
	The following theorem is a continuous-time counterpart to
	\cite[Theorem~20]{Waarde2023TAC}.
	\begin{theorem}
		\label{thm:QS}
		Suppose that Assumptions~\ref{assump:true_system}
		and \ref{assump:surjectivity} hold for 
		the data $\mathfrak{D} = (u_k,y_k)_{k=1}^{K}
		\in \Gamma_{\tau}$
		and
		the noise-intensity matrix
		$\Theta \in \mathbb{S}^p$ with
		$\Theta\geq 0$.
		Then
		the following statements are equivalent:
		\begin{enumerate}
			\renewcommand{\labelenumi}{\textup{(\roman{enumi})}}
			\item 
			The data $\mathfrak{D}$
			are informative for quadratic stabilization 
			under the noise class $\Delta_{\tau, \Theta}$.
			\item 
			\label{enum:LMI}
			There exist matrices $\Phi \in \mathbb{S}^{qL}$
			and 
			$D \in \mathbb{R}^{m \times qL}$ such that 
			$\Phi > 0$ and
			\begin{equation}
				\label{eq:LMI_stabilization_thm}
				\begin{bmatrix}
					\begin{bmatrix}
						-J_{q(L-1)} \Phi  \\ D \\ 0_{p\times qL}
					\end{bmatrix}^{\top}  + 
					\begin{bmatrix}
						-J_{q(L-1)}\Phi \\ D \\  0_{p\times qL}
					\end{bmatrix} & \Phi \\
					\Phi & 0_{qL \times qL}
				\end{bmatrix} -
				\mathcal{N} > 0,
			\end{equation}
			where the matrices 
			$J_{q(L-1)}$ 
			and
			$\mathcal{N}$ 
			are defined by \eqref{eq:J_def} and
			\eqref{eq:calN_def},
			respectively. 
		\end{enumerate}
		Moreover, if statement~\textup{(ii)} holds, then
		the controller
		$C \coloneqq D \Phi^{-1}$ stabilizes all systems $R \in 
		\Sigma_{\mathfrak{D},\Theta}$.
	\end{theorem}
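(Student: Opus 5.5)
The plan is to follow the discrete-time argument of \cite[Theorem~20]{Waarde2023TAC}: rewrite the stabilization inequality \eqref{eq:LMI_stabilization} as a quadratic matrix inequality in $R$, and then apply the strict matrix $S$-lemma (Lemma~\ref{lem:S_lemma}), whose hypotheses on $\mathcal{N}$ are supplied by Lemma~\ref{lem:cond_check_for_Slemma}.

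\emph{Rewriting the inequality.} Fix $\Psi>0$ and $C$. Write $A_R \coloneqq \begin{bmatrix} -J_{q(L-1)} \\ C \\ R\end{bmatrix} = A_0 + J_p^\top R$, where $A_0$ has the last $p$ rows equal to zero. Set $\Phi \coloneqq \Psi^{-1}$ and apply the congruence by $\Phi$ to \eqref{eq:LMI_stabilization}. The inequality becomes
\[
\Phi A_R^\top + A_R\Phi > 0 .
\]
With $D \coloneqq C\Phi$ this reads
\[
\Phi A_0^\top + A_0\Phi + \Phi R^\top J_p + J_p^\top R\Phi > 0 .
\]
This is exactly
\[
\begin{bmatrix} I_{qL} \\ Z \end{bmatrix}^{\top} \mathcal{M} \begin{bmatrix} I_{qL} \\ Z \end{bmatrix} > 0,
\]
where $Z \coloneqq R^\top J_p \in \mathbb{R}^{qL\times qL}$ and $\mathcal{M}$ is the first block matrix in \eqref{eq:LMI_stabilization_thm}. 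Its $(1,1)$ block is $A_0\Phi+\Phi A_0^\top$ (note that $A_0\Phi$ has rows $-J_{q(L-1)}\Phi$, $D$, $0$), its off-diagonal block is $\Phi$, and its $(2,2)$ block is zero. Expanding the quadratic form gives the cross terms $\Phi Z + Z^\top\Phi = \Phi R^\top J_p + J_p^\top R\Phi$, as required.

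\emph{Relating $\mathcal{N}$ to the data-consistent set.} By \eqref{eq:calN_def},
\[
\begin{bmatrix} I \\ Z\end{bmatrix}^\top \mathcal{N}\begin{bmatrix} I \\ Z\end{bmatrix} = J_p^\top \begin{bmatrix} I_p \\ R^\top\end{bmatrix}^\top \Pi \begin{bmatrix} I_p \\ R^\top\end{bmatrix} J_p \quad\text{whenever } Z = R^\top J_p .
\]
Hence, by Lemma~\ref{lem:system_rep}, $R\in\Sigma_{\mathfrak{D},\Theta}$ if and only if $R^\top J_p \in \mathcal{Z}_{qL,qL}(\mathcal{N})$.

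\emph{The main obstacle.} A general $Z\in\mathcal{Z}_{qL,qL}(\mathcal{N})$ need not have the form $R^\top J_p$, so one must show that every such $Z$ does. Write $Z=[Z_1\ Z_2]$ with $Z_2\in\mathbb{R}^{qL\times p}$. The quadratic form equals
\[
\begin{bmatrix} Z_1^\top(-HH^*)Z_1 & * \\ * & \ast \end{bmatrix},
\]
with $(1,1)$ block $-Z_1^\top HH^* Z_1$, because the upper-left $(qL-p)$ part of $J_p^\top(\cdot)J_p$ vanishes. Nonnegativity forces $-Z_1^\top HH^*Z_1\ge0$, and Assumption~\ref{assump:surjectivity} then gives $Z_1=0$. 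Therefore $Z = R^\top J_p$ with $R^\top\coloneqq Z_2$, and consequently
\[
\mathcal{Z}_{qL,qL}(\mathcal{N}) = \{R^\top J_p : R\in\Sigma_{\mathfrak{D},\Theta}\}.
\]

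\emph{Conclusion.} Statement (i) holds if and only if there exist $\Phi>0$ and $D$ with $\mathcal{Z}_{qL,qL}(\mathcal{N})\subseteq \mathcal{Z}^+_{qL,qL}(\mathcal{M})$. By Lemmas~\ref{lem:cond_check_for_Slemma} and~\ref{lem:S_lemma}, this is equivalent to $\mathcal{M}-\alpha\mathcal{N}>0$ for some $\alpha\ge0$.

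It remains to remove $\alpha$. First, $\alpha=0$ is impossible, since $\mathcal{M}$ has zero $(2,2)$ block and so cannot be positive definite; hence $\alpha>0$. Dividing by $\alpha$ and replacing $(\Phi,D)$ by $(\Phi/\alpha, D/\alpha)$ uses that $\mathcal{M}$ is linear in $(\Phi,D)$, with $\Phi/\alpha>0$; this yields \eqref{eq:LMI_stabilization_thm}. Conversely, \eqref{eq:LMI_stabilization_thm} is the case $\alpha=1$.

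Finally, reversing the congruence with $\Psi\coloneqq\Phi^{-1}$ and $C\coloneqq D\Phi^{-1}$ shows that \eqref{eq:LMI_stabilization} holds for every $R\in\Sigma_{\mathfrak{D},\Theta}$. Lemma~\ref{lem:QMI} then gives that $C$ stabilizes every $R\in\Sigma_{\mathfrak{D},\Theta}$.
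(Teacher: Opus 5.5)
Your proposal is correct and follows essentially the paper's route. You use the congruence by $\Phi=\Psi^{-1}$ to rewrite \eqref{eq:LMI_stabilization} as a strict QMI in $Z=R^{\top}J_p$, apply the strict matrix $S$-lemma with hypotheses supplied by Lemma~\ref{lem:cond_check_for_Slemma}, get $\alpha>0$ from the zero $(2,2)$-block of $\mathcal{M}$, and rescale. The only difference is that you prove the identity $\mathcal{Z}_{qL,qL}(\mathcal{N})=\{R^{\top}J_p : R\in\Sigma_{\mathfrak{D},\Theta}\}$ directly: the leading $(qL-p)\times(qL-p)$ block of the quadratic form is $-Z_1^{\top}HH^*Z_1$, which forces $Z_1=0$. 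The paper instead cites \cite[Theorem~3.4]{Waarde2023SIAM} for this identity, and your self-contained argument is valid.
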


	\begin{proof}
		First, suppose that statement~(i) holds.
		Then there exist $\Psi \in \mathbb{S}^{qL}$ and 
		$C \in \mathbb{R}^{m\times qL}$
		such that
		$\Psi > 0$ and 
		the inequality \eqref{eq:LMI_stabilization} holds
		for all $R \in \Sigma_{\mathfrak{D},\Theta}$.
		Define
		$\mathcal{M}  \in \mathbb{S}^{2qL}$ by
		\begin{equation}
			\label{eq:M_def}
			\mathcal{M} \coloneqq 
			\begin{bmatrix}
				\Psi^{-1} \begin{bmatrix}
					-J_{q(L-1)} \\ C \\  0_{p\times qL}
				\end{bmatrix}^{\top} + 
				\begin{bmatrix}
					-J_{q(L-1)} \\ C \\  0_{p\times qL}
				\end{bmatrix}\Psi^{-1}  & \Psi^{-1} \\
				\Psi^{-1} & 0_{qL \times qL}
			\end{bmatrix}.
		\end{equation}
		A routine calculation shows that 
		for all $R \in \mathbb{R}^{p \times qL}$,
		\begin{equation}
			\label{eq:Lyap_QMI}
			\Psi^{-1}
			\begin{bmatrix}
				-J_{q(L-1)} \\ C \\ R
			\end{bmatrix}^{\top}
			+
			\begin{bmatrix}
				-J_{q(L-1)} \\ C \\ R
			\end{bmatrix} \Psi^{-1} 
			=
			\begin{bmatrix}
				I_{qL} \\
				R^{\top} 
				J_p
			\end{bmatrix}^{\top}
			\mathcal{M} 
			\begin{bmatrix}
				I_{qL} \\
				R^{\top} 
				J_p
			\end{bmatrix},
		\end{equation}
		where the matrix $J_p$ is defined by \eqref{eq:Jp_def}.
		By \eqref{eq:LMI_stabilization} and 
		\eqref{eq:Lyap_QMI}, we obtain
		the following implication:
		\begin{equation}
			\label{eq:NM_relation}
			R \in \Sigma_{\mathfrak{D},\Theta}
			\quad \Rightarrow \quad 
			R^{\top} J_p\in \mathcal{Z}_{qL,qL}^{+}(\mathcal{M}).
		\end{equation}
		From Lemma~\ref{lem:system_rep},
		it follows that 
		$R \in \Sigma_{\mathfrak{D},\Theta}$ if and only if
		$R^{\top} \in \mathcal{Z}_{p,qL}(\Pi)$, where 
		the matrix $\Pi$ is as in \eqref{eq:Pi_def}.
		Since the $(2,2)$-block $\Pi_{22} = -HH^*$ of $\Pi$
		is invertible by Assumption~\ref{assump:surjectivity},
		we deduce from 
		\cite[Theorem~3.4]{Waarde2023SIAM} that
		\begin{equation}
			\label{eq:N_tilde_N_relation}
			\left\{
			R^{\top} J_p:
			R \in \Sigma_{\mathfrak{D},\Theta}
			\right\} =
			\mathcal{Z}_{qL,qL}(\mathcal{N}).
		\end{equation}
		It follows from
		\eqref{eq:NM_relation}
		and 
		\eqref{eq:N_tilde_N_relation} that
		\[
		\mathcal{Z}_{qL,qL}(\mathcal{N}) \subseteq
		\mathcal{Z}_{qL,qL}^{+}(\mathcal{M}).
		\]
		By Lemmas~\ref{lem:S_lemma}
		and \ref{lem:cond_check_for_Slemma},
		there exists a scalar $\alpha \geq 0$ such that 
		$\mathcal{M} - \alpha \mathcal{N} > 0$.
		Since the $(2,2)$-block of $\mathcal{M}$
		is the zero matrix $0_{qL\times qL}$, we obtain $\alpha >0$.
		If we define
		$D \coloneqq (1/\alpha)C \Psi^{-1} \in \mathbb{R}^{m\times qL}$ and $\Phi \coloneqq (1/\alpha)\Psi^{-1} \in \mathbb{S}^{qL}$, then
		$\mathcal{M} - \alpha \mathcal{N} > 0$ can be written as
		the LMI \eqref{eq:LMI_stabilization_thm}.
		Thus, statement~(ii) holds.
		
		Conversely, suppose that statement~(ii) holds.
		Define $C \coloneqq D\Phi^{-1}$ and $\Psi \coloneqq \Phi^{-1}$. Then the LMI \eqref{eq:LMI_stabilization_thm} is equivalent to
		$\mathcal{M} - \mathcal{N} > 0$,
		where the matrix 
		$\mathcal{M} $
		is as in \eqref{eq:M_def}.
		Since \eqref{eq:Lyap_QMI} and \eqref{eq:N_tilde_N_relation} remain valid without
		assuming that statement~(i) holds,
		it follows that
		for all $R \in \Sigma_{\mathfrak{D},\Theta}$,
		\begin{align*}
			0 &<
			\begin{bmatrix}
				I_{qL} \\
				R^{\top} 
				J_p
			\end{bmatrix}^{\top}
			(\mathcal{M} -  \mathcal{N})
			\begin{bmatrix}
				I_{qL} \\
				R^{\top} 
				J_p
			\end{bmatrix} \leq 
			\Psi^{-1}
			\begin{bmatrix}
				-J_{q(L-1)} \\ C \\ R
			\end{bmatrix}^{\top}
			+
			\begin{bmatrix}
				-J_{q(L-1)} \\ C \\ R
			\end{bmatrix} \Psi^{-1}.
		\end{align*}
		Thus, statement~(i) holds, and Lemma~\ref{lem:QMI} shows that
		$C$ stabilizes all systems $R \in 
		\Sigma_{\mathfrak{D},\Theta}$.
	\end{proof}
	
	\section{Informativity of noise-free data
		for system identification}
	\label{sec:surjectivity}
	In this short section, we consider noise-free data and 
	characterize informativity
	for system identification in terms of the data-embedded operator $H$
	defined by \eqref{eq:H_def}.
	As in the case of quadratic stabilization, system identification reduces to matrix computations.
	
	For
	input-output data $(u_k,y_k)_{k=1}^{K} \in \Gamma_{\tau}$,
	we consider the noise-free system 
	\begin{equation}
		\label{eq:system_k_noise_free}
		y_k^{(L)}(t) + R
		\begin{bmatrix}
			u_k(t) \\ y_k(t)  \\ \vdots \\ u_k^{(L-1)}(t) \\ y_k^{(L-1)}(t)
		\end{bmatrix} = 0\quad \text{for a.e.~$t \in[0,\tau_k]$
			and all $k=1,\dots, K$}.
	\end{equation}
	We say that $(u_k,y_k)_{k=1}^{K}$ are {\em informative 
		for system identification} if
	\eqref{eq:system_k_noise_free} is satisfied for at most one system
	$R\in \mathbb{R}^{p\times qL}$.
	The following result is an extension of the characterization in
	the state-feedback case~\cite[Proposition~3.2]{Wakaiki2025Cont}.
	Recall that
	the matrices $Y_LH^* $ and $HH^*$ can be computed by combining Proposition~\ref{prop:adjoint}
	with \eqref{eq:YH_decompose} and
	\eqref{eq:HH_decompose}, respectively.
	\begin{proposition}
		\label{prop:identification}
		Assume that for the input-output data $(u_k,y_k)_{k=1}^{K}
		\in \Gamma_{\tau}$,
		there exists a system 
		$R_s \in \mathbb{R}^{p\times qL}$  such that 
		\eqref{eq:system_k_noise_free} with $R=R_s$ holds.
		Define the operators $H$ and $Y_L$ 
		by 
		\eqref{eq:H_def} and \eqref{eq:YL_def}, respectively.
		Then
		the following statements are equivalent:
		\begin{enumerate}
			\renewcommand{\labelenumi}{\textup{(\roman{enumi})}}
			\item The data $(u_k,y_k)_{k=1}^{K}$
			are informative for system identification.
			\item $H$ is surjective.
			\item $HH^* \in \mathbb{S}^{qL}$ is invertible. 
		\end{enumerate}
		Moreover, if statement~\textup{(iii)} holds, 
		then 
		\begin{equation}
			\label{eq:Rs_rep}
			R_s = -Y_L H^* (HH^*)^{-1}.
		\end{equation}
	\end{proposition}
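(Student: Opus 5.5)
The plan is to reduce everything to the noise-free operator equation via Lemma~\ref{lem:data_consistency_cond}. With $v_k = 0$ the synthesis operator $V_0$ is zero, so $R$ satisfies \eqref{eq:system_k_noise_free} if and only if $RH + Y_L = 0$ as operators from $\Ht$ to $\mathbb{R}^p$. By assumption, $R_s$ is one solution. Therefore the set of consistent systems is the affine set $R_s + \{\Delta \in \mathbb{R}^{p\times qL} : \Delta H = 0\}$, and informativity for system identification means exactly that $\Delta H = 0$ implies $\Delta = 0$.

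For (i)$\Leftrightarrow$(ii), I will use that $H$ has finite-dimensional codomain $\mathbb{R}^{qL}$, so $\Ran H$ is a subspace of $\mathbb{R}^{qL}$ and is automatically closed.
\begin{itemize}
\item If $H$ is surjective and $\Delta H = 0$, then every row $\delta^\top$ of $\Delta$ annihilates $\Ran H = \mathbb{R}^{qL}$. Hence $\delta = 0$, so $\Delta = 0$ and $R_s$ is the unique consistent system.
\item If $H$ is not surjective, pick a nonzero $c \in (\Ran H)^{\perp}$ and a nonzero $e \in \mathbb{R}^p$. Then $\Delta \coloneqq e c^{\top}$ is nonzero and satisfies $\Delta H = 0$. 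So $R_s + \Delta$ is a second consistent system, contradicting (i).
\end{itemize}

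For (ii)$\Leftrightarrow$(iii), I will use the standard identities $\Ker(HH^*) = \Ker H^*$, which holds because $\langle HH^*b, b\rangle = \|H^*b\|^2$, and $\Ker H^* = (\Ran H)^{\perp}$. Since $\Ran H$ is closed, $H$ is surjective if and only if $(\Ran H)^{\perp} = \{0\}$. That is equivalent to $\Ker(HH^*) = \{0\}$, which for the square matrix $HH^* \in \mathbb{S}^{qL}$ is equivalent to invertibility.

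For the formula \eqref{eq:Rs_rep}, I will compose the identity $R_s H + Y_L = 0$ on the right with $H^*$. This gives $R_s HH^* = -Y_L H^*$, and multiplying by $(HH^*)^{-1}$ yields $R_s = -Y_L H^*(HH^*)^{-1}$. Here $HH^*$ and $Y_LH^*$ are matrices in the sense of Section~\ref{sec:adjoints}.

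The only point needing care is the infinite-dimensional domain $\Ht$. The argument handles it because all ranges sit in finite-dimensional spaces, so closedness of ranges is automatic and no approximation argument is required. The one item to check is that Lemma~\ref{lem:data_consistency_cond} applies with zero noise; it does, since $(0)_{k=1}^K \in \Delta_{\tau}$ and its synthesis operator is $V_0 = 0$.
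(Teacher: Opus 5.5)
Your proof is correct and follows the same route as the paper: reduce to the operator equation $RH + Y_L = 0$ via Lemma~\ref{lem:data_consistency_cond}, prove (i)$\Leftrightarrow$(ii) and (ii)$\Leftrightarrow$(iii), and obtain the formula by composing with $H^*$ and inverting $HH^*$. The only difference is that the paper defers the two equivalences to the earlier state-feedback result and to a standard operator-theoretic fact, while you write them out explicitly: the $ec^{\top}$ perturbation for non-surjective $H$, and $\Ker(HH^*) = \Ker H^* = (\Ran H)^{\perp}$ with $\Ran H$ automatically closed in $\mathbb{R}^{qL}$.
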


	\begin{proof}
		Let $R\in \mathbb{R}^{p\times qL}$.
		Lemma~\ref{lem:data_consistency_cond}
		shows that 
		the differential equations given in \eqref{eq:system_k_noise_free} hold if and only if
		the operator equation
		\begin{equation}
			\label{eq:noise_free_op_eq}
			RH + Y_L = 0
		\end{equation}
		holds.
		Therefore,
		the equivalence of (i) and (ii) can be proved
		in the same way as in the state-feedback case
		\cite[Proposition~3.2]{Wakaiki2025Cont}.
		The equivalence of (ii) and (iii)
		is a well-known fact for bounded linear 
		operators; see, e.g., \cite[Proposition~12.1.3]{Tucsnak2009}.
		Finally, if statement~(iii) holds, then
		$H H^* (HH^*)^{-1} = I_{qL}$.
		From this and \eqref{eq:noise_free_op_eq},
		we conclude that
		\eqref{eq:Rs_rep} holds.
	\end{proof}
	
	In Appendix~\ref{sec:approximation},
	we provide another characterization of
	informativity for system identification in terms of
	B-splines.
	
	\section{Example}
	\label{sec:example}
	We consider an inverted pendulum mounted on 
	a cart.
	Let $x(t)$ be the horizontal position of the cart 
	at time $t$, and let $\theta(t)$ be the angle 
	of the pendulum measured 
	from the upright
	equilibrium
	at time $t$. 
	The control input $u(t)$ is the horizontal 
	force applied to the cart.
	The system parameters are defined as follows:
	$M_{\mathrm{c}}$ and $M_{\mathrm{p}}$
	are the masses of the cart and the pendulum, respectively;
	$\mu_{\mathrm{c}}$
	and $\mu_{\mathrm{p}}$ are 
	the viscous friction coefficients for the cart and
	the pendulum, respectively;
	$r$ is the distance from the 
	pivot to the center of mass of the pendulum;
	$J_{\mathrm{m}}$ is the pendulum's 
	moment of inertia about the center of mass; and 
	$g$ is the 
	gravitational acceleration.
	Linearizing the nonlinear equations of motion around the upright
	equilibrium yields
	\begin{equation*}
		\left\{
		\begin{aligned}
			&(M_{\mathrm{c}} + M_{\mathrm{p}}) x''(t) + \mu_{\mathrm{c}} x'(t) + M_{\mathrm{p}}r  \theta''(t)
			= u(t), \\
			& M_{\mathrm{p}} r x''(t) + (J_{\mathrm{m}} + 
			M_{\mathrm{p}}r^2) \theta''(t) + 
			\mu_{\mathrm{p}} \theta'(t) - M_{\mathrm{p}} g r \theta(t) = 0
		\end{aligned}
		\right.
	\end{equation*}
	for $t \geq 0$.
	Let $y(t) \coloneqq \begin{bmatrix}
		x(t) & \theta(t)
	\end{bmatrix}^{\top}$ be
	the output at time $t$.
	Choosing $R_{y,1},R_{y,0}
	\in \mathbb{R}^{2\times 2}$ and 
	$R_{u,0} \in \mathbb{R}^{2\times 1}$ appropriately,
	this system can be written as
	\[
	y''(t) + R_{y,1}y'(t) + R_{y,0}y(t) + R_{u,0}u(t) = 0,\quad 
	t \geq 0.
	\]
	As in the general form \eqref{eq:system_original},
	we incorporate the noise $v$ into the above system.
	The true parameter values used for data generation 
	are set as follows:
	\begin{gather*}
		M_{\mathrm{c}}= 0.6\,[\mathrm{kg}],\quad M_{\mathrm{p}} = 0.3\,[\mathrm{kg}],\quad 
		\mu_{\mathrm{c}} = 0.1\,[\mathrm{kg}/\mathrm{s}],  \quad 
		\mu_{\mathrm{p}} = 0.0003\,[\mathrm{kg}\cdot 
		\mathrm{m}^2/\mathrm{s}],\\
		r = 0.5\,[\mathrm{m}],\quad  J_{\mathrm{m}} = 0.025\,[\mathrm{kg}\cdot 
		\mathrm{m}^2],\quad 
		g = 9.81\,[\mathrm{m}/
		\mathrm{s}^2].
	\end{gather*}
	
	With $\tau = 0.5$, we  generate input-output data
	on the common interval $[0,\tau]$ 
	and discard any trajectory satisfying $|\theta(t)| > 0.1$
	for some $t \in [0,\tau]$ 
	to maintain the validity of the linear approximation.
	We repeat this process until we collect $25$ valid trajectories.
	The initial cart position $x(0)$ and 
	velocity $x'(0)$ 
	are sampled uniformly from the interval $(-0.1,0.1)$,
	and
	the initial angle $\theta(0)$ and angular velocity
	$\theta'(0)$ are sampled uniformly
	from the interval $(-0.01,0.01)$.
	The input $u$ is generated as
	\[
	u(t) =\sum_{i=1}^5 a_i \sin(2\pi f_i t + \varphi_i),\quad t \geq 0,
	\]
	where
	the amplitudes $a_i$, the
	frequencies $f_i$, and the initial phases $\varphi_i$
	are independently sampled from
	the uniform distributions on the intervals $(0,0.2)$, 
	$(0,5)$, and $(0,2\pi)$, respectively.
	Zero-mean Gaussian white noise
	$v$ with covariance matrix
	\[
	\mathrm{E}[v(t)v(s)^{\top}] = 
	2 \times 10^{-4}\delta(t-s)I_2
	\] is added, where 
	$\mathrm{E}[\cdot]$ is the expectation operator 
	and $\delta$ is the Dirac
	delta function.

	Let $\mathfrak{D} = (u_k,y_k)_{k=1}^{25}$
	be the valid data, and set the noise-intensity
	matrix to $\Theta= 
	10^{-6} I_2$.
	From the formula in Proposition~\ref{prop:adjoint}, we numerically verify that
	$HH^*$ is invertible, which implies that Assumption~\ref{assump:surjectivity} is 
	satisfied.
	Note that Assumption~\ref{assump:true_system} 
	is also satisfied.
	Indeed, applying 
	Proposition~\ref{prop:adjoint} 
	to
	the noise sequence 
	$(v_k)_{k=1}^{25}$ used to generate the data 
	$\mathfrak{D}$ gives
	$V_{0}V_{0}^* \leq 7.60 \times 10^{-7} I_2$, 
	which shows
	$(v_k)_{k=1}^{25} \in \Delta_{\tau,\Theta}$. 
	Therefore,
	the inverted pendulum with the 
	true parameter values belongs to the set
	$\Sigma_{\mathfrak{D},\Theta}$ of 
	data-consistent systems.
	
	By solving the LMIs in MATLAB
	R2026a with YALMIP~\cite{Lofberg2004}  and MOSEK~\cite{MOSEK2026},
	we find from Theorem~\ref{thm:QS}
	that the data
	$\mathfrak{D}$ are informative
	for quadratic stabilization under the noise class
	$\Delta_{\tau,\Theta}$.
	From a feasible solution of the LMIs,
	we can also compute a stabilizing controller
	\[
	u''(t) + 35u'(t) +
	804u(t) =
	701x'(t)
	+127 x(t) + 
	3600\theta'(t) + 
	15907 \theta(t),\quad t \geq 0.
	\]
	Since $\theta$ and $\theta'$
	have smaller magnitudes than $x$ and $x'$,
	their corresponding gains are larger.

	\section{Concluding remarks}
	\label{sec:conclusion}
	We have proposed a data-informativity
	approach for output feedback stabilization of
	continuous-time systems.
	The proposed synthesis-operator approach
	bridges the gap between
	continuous-time data and 
	discrete-time techniques.
	Employing behavioral theory, 
	we have established a necessary and sufficient condition
	under which noisy data are informative 
	for quadratic stabilization.
	Since the synthesis operators have finite-dimensional ranges,
	this condition can be formulated as LMIs.
	We have further explored data informativity
	for system identification in the noise-free setting.
	Future work includes extending 
	this synthesis-operator approach to input-output data corrupted by
	measurement noise.
	
	\appendix 
	\section{Proof of Lemma~\ref{lem:QMI}}
	\label{sec:Proof_of_Lemma}
	The proof of Lemma~\ref{lem:QMI}
	relies on 
	several results on quadratic differential forms 
	obtained in \cite{Willems1998}.
	We review these results in Section~\ref{sec:preliminaries_QDF}, and then prove
	Lemma~\ref{lem:QMI} in Section~\ref{sec:proof_of_Lyap}.
	
	\subsection{Background material on quadratic differential forms}
	\label{sec:preliminaries_QDF}
	Let $\Rsym$ denote
	the set of $q \times q$ real polynomial matrices $\Phi$ in the commuting indeterminates $\zeta$ and $\eta$ such that 
	$\Phi(\zeta,\eta) = \Phi(\eta,\zeta)^{\top}$.
	Let $\Phi_{i,j}\in\mathbb{R}^{q \times q}$ be 
	the coefficient 
	matrix corresponding to the monomial $\zeta^i\eta^j$ in $\Phi \in  \Rsym$,
	that is,
	\[
	\Phi(\zeta,\eta) = \sum_{i,j =0}^{d_0}
	\Phi_{i,j}\zeta^i\eta^j
	\]
	for some $d_0 \in \mathbb{N}_0$.
	The quadratic differential form $Q_{\Phi}\colon
	\Cinf(\mathbb{R};\mathbb{R}^q) \to
	\Cinf(\mathbb{R}) 
	$ induced by $\Phi$ is defined as 
	\[
	Q_{\Phi}(w)(t) \coloneqq \sum_{i,j =0}^{d_0} w^{(i)}(t)^{\top}
	\Phi_{i,j} w^{(j)}(t),\quad t \in \mathbb{R}.
	\]
	For $f \in \Cinf(\mathbb{R}) $,
	we write $f \geq 0$ if $f(t) \geq 0$
	for all $t \in \mathbb{R}$. Similarly,
	$\Phi \geq 0$ means that $Q_{\Phi}(w) \geq 0$ for all $w \in \Cinf(\mathbb{R};\mathbb{R}^q)$.
	
	Let $\Phi
	\in \Rsym \setminus \{ 0\}$. 
	Noting that $\Phi_{i,j} = \Phi_{j,i}^{\top}$,
	we define the degree $\deg(\Phi)$ of $\Phi$ by
	\[
	\deg(\Phi) \coloneqq \max
	\{i \in \mathbb{N}_0 :\Phi_{i,j}\ne 0 \text{~for some $j \in \mathbb{N}_0$}\}.
	\]
	Let $d \coloneqq \deg(\Phi)$.
	Using the coefficient matrices $\Phi_{i,j}$, 
	we define
	the matrix $\widetilde \Phi \in \mathbb{S}^{q(d+1)}$ by
	\[
	\widetilde \Phi \coloneqq 
	\begin{bmatrix}
		\Phi_{0,0} & \cdots & \Phi_{0,d} \\
		\vdots & \ddots & \vdots \\
		\Phi_{d,0} & \cdots & \Phi_{d,d}
	\end{bmatrix}.
	\]
	We call $\widetilde \Phi$ the {\em block coefficient matrix of $\Phi$}.
	For all $w \in \Cinf(\mathbb{R};\mathbb{R}^q) $
	and $t \in \mathbb{R}$, we have
	\[
	Q_{\Phi}(w)(t)  =
	\begin{bmatrix}
		w(t) \\ w'(t) \\ \vdots \\ w^{(d)}(t)
	\end{bmatrix}^{\top} \widetilde \Phi
	\begin{bmatrix}
		w(t) \\ w'(t) \\ \vdots \\ w^{(d)}(t)
	\end{bmatrix}.
	\]
	Given arbitrary $w_0,\dots,w_{d} \in \mathbb{R}^q$,
	we can construct 
	$w \in \Cinf(\mathbb{R};\mathbb{R}^q)$
	satisfying $w^{(\ell)}(0) = w_{\ell}$ for all $\ell=0,\dots,d$.
	Therefore,
	$\Phi \geq 0$ is equivalent to 
	$\widetilde \Phi \geq  0$.
	
	Consider a $q\times q$ real  polynomial matrix $P$
	of degree $L \in \mathbb{N}$ defined by
	\begin{equation}
		\label{eq:poly_matrix_G_def}
		P(\xi) \coloneqq 
		I_q \xi^{L} + P_{L-1}\xi^{L-1} + \cdots +P_0.
	\end{equation}
	Define the behavior $\mathcal{B}$ by
	\begin{equation}
		\label{eq:behavior}
		\mathcal{B} \coloneqq 
		\left\{
		w\in \Cinf(\mathbb{R};\mathbb{R}^q):
		P\left(\frac{d}{dt}\right) w = 0
		\right\}.
	\end{equation}
	The behavior $\mathcal{B}$ represents a dynamical system, 
	and hence we regard $\mathcal{B}$ as a system.
	We write \vspace{4pt}
	\begin{enumerate}
		\renewcommand{\labelenumi}{\textup{\alph{enumi})}}
		\item $\Phi \eB 0$ 
		if $Q_{\Phi}(w) = 0$ for all $w \in \mathcal{B}$;
		\vspace{4pt}
		\item $\Phi \geB 0$ 
		if $Q_{\Phi}(w) \geq 0$ for all $w \in \mathcal{B}$;
		\vspace{4pt}
		\item  $\Phi \gB 0$ 
		if $\Phi \geB 0$ and, for any $w \in \mathcal{B}$, $Q_{\Phi}(w) = 0$ implies $w = 0$; and \vspace{4pt}
		\item $\Phi \ggB 0$ 
		if $\Phi \geB 0$ and, for any $w \in \mathcal{B}$, $Q_{\Phi}(w)(0) = 0$ implies $w = 0$.
	\end{enumerate}
	\vspace{4pt}
	
	Let $\Phi \in \Rsym \setminus \{ 0\}$ and set $d \coloneqq \deg \Phi \leq L-1$. Since
	for any $w_0,\dots,w_d \in \mathbb{R}^q$,
	there exists $w \in \mathcal{B}$
	such that 
	$w^{(\ell)}(0) = w_{\ell}$ for all $\ell=0,\dots,d$,
	we obtain the following equivalences:
	\begin{align}
		\Phi \geB 0 \quad 
		&\Leftrightarrow \quad 
		\widetilde \Phi \geq 0, \label{eq:B_nn}\\
		\Phi \ggB 0 \quad 
		&\Leftrightarrow \quad 
		\widetilde \Phi >0 \hspace{6pt}\text{and} \hspace{6pt}
		\deg \Phi = L-1 .\label{eq:B_sp}
	\end{align}
	
	For
	$\Phi \in \Rsym $, we 
	define $\overset{\bullet}{\Phi}\in \Rsym$ by 
	\begin{equation}
		\label{eq:bullet_Phi}
		\overset{\bullet}{\Phi}(\zeta,\eta) \coloneqq 
		(\zeta+\eta) \Phi(\zeta,\eta).
	\end{equation}
	We say that
	the system $\mathcal{B}$ 
	is
	{\em stable} if $\lim_{t \to \infty} w(t) = 0$ for all $w \in \mathcal{B}$.
	In the theorem below,
	statement~a) is due to
	\cite[Proposition~4.7]{Willems1998}, and
	statement~b) follows from
	\cite[Theorem~4.12]{Willems1998}
	as a special case  with the following choice of
	$\Psi_0 \in \Rsym$:
	\begin{equation}
		\label{eq:Psi0}
		\Psi_0(\zeta,\eta) \coloneqq  -\sum_{i =0}^{L-1}
		I_q \zeta^i\eta^i.
	\end{equation}
	
	\begin{theorem}
		\label{thm:QDF}
		Let $P$ be a $q \times q$ real 
		polynomial matrix  of degree $L\in \mathbb{N}$ given by \eqref{eq:poly_matrix_G_def},
		and define $\mathcal{B}$
		by \eqref{eq:behavior}.
		Then the following statements hold:
		\begin{enumerate}
			\renewcommand{\labelenumi}{\textup{\alph{enumi})}}
			\item 	If 	there exists $\Psi \in \Rsym$
			such that $\Psi \geB 0$
			and $-\overset{\bullet}{\Psi} \gB  0$, then
			the system $\mathcal{B}$ is stable and $\Psi \ggB 0$.
			\item 
			If
			the system $\mathcal{B}$ is stable,
			then there exists $\Psi \in \Rsym$ with $\deg \Psi = L-1$ such that 
			$\Psi \ggB 0$ and 
			$\overset{\bullet}{\Psi} - \Psi_0 \eB 0$,
			where $\Psi_0$ is as in \eqref{eq:Psi0}.
		\end{enumerate}
	\end{theorem}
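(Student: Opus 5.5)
The theorem is a specialization of results from \cite{Willems1998}, so the plan is to match our setting to the hypotheses there and then fix the degree of $\Psi$. The common setting is this. Since the leading coefficient of $P$ in \eqref{eq:poly_matrix_G_def} is $I_q$, the matrix $P$ is square and nonsingular, so $\mathcal{B}$ in \eqref{eq:behavior} is an autonomous behavior. Moreover, $\mathcal{B}$ is the finite-dimensional solution space of $w^{(L)} = -P_{L-1}w^{(L-1)}-\cdots-P_0 w$. Consequently every $w\in\mathcal{B}$ is determined by $(w(0),\dots,w^{(L-1)}(0))$, and this initial vector can be chosen arbitrarily. This is the fact already used to obtain \eqref{eq:B_nn} and \eqref{eq:B_sp}.

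For statement a), I would invoke \cite[Proposition~4.7]{Willems1998} directly. For an autonomous behavior, it says that a quadratic differential form that is $\mathcal{B}$-nonnegative and whose derivative is $\mathcal{B}$-negative is a Lyapunov function. This forces asymptotic stability and $\Psi \ggB 0$. The only thing to check is that $\overset{\bullet}{\Psi}$ in \eqref{eq:bullet_Phi} is the two-variable polynomial inducing $\frac{d}{dt}Q_\Psi(w)$. This holds because differentiating $w^{(i)\top}\Psi_{i,j}w^{(j)}$ produces exactly the coefficients of $(\zeta+\eta)\Psi(\zeta,\eta)$.

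For statement b), I would apply \cite[Theorem~4.12]{Willems1998}. It says that if $\mathcal{B}$ is autonomous and stable and $\Psi_0$ satisfies $-\Psi_0\gB 0$ (equivalently $\Psi_0<_{\mathcal{B}}0$), then there is $\Psi$ with $\overset{\bullet}{\Psi}\eB\Psi_0$ and $\Psi\geB 0$, in fact $\Psi\ggB 0$. So the key check is that $\Psi_0$ in \eqref{eq:Psi0} qualifies. We have $Q_{\Psi_0}(w)=-\sum_{i=0}^{L-1}\|w^{(i)}\|^2\le 0$. If this vanishes identically for some $w\in\mathcal{B}$, then in particular $w^{(i)}(0)=0$ for $i\le L-1$, hence $w=0$.

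The remaining step, which I expect to be the only genuinely nontrivial point, is to arrange $\deg\Psi=L-1$. I would replace $\Psi$ by its reduction modulo $\mathcal{B}$. On $\mathcal{B}$, each $w^{(k)}$ with $k\ge L$ is a constant linear combination of $w,\dots,w^{(L-1)}$, obtained by repeatedly using $P(\frac{d}{dt})w=0$ with its monic leading term. Substituting this into $Q_\Psi$ yields $\Psi'\in\Rsym$ with $\deg\Psi'\le L-1$ and $\Psi'\eB\Psi$. It follows that $\overset{\bullet}{\Psi'}\eB\overset{\bullet}{\Psi}$, since both induce $\frac{d}{dt}Q_\Psi(w)$ on $\mathcal{B}$, and hence $\overset{\bullet}{\Psi'}-\Psi_0\eB 0$. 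Also $\Psi'\ggB 0$. Then \eqref{eq:B_sp} forces $\deg\Psi'=L-1$ and $\widetilde{\Psi'}>0$. Finally, I would double-check that the version of \cite[Theorem~4.12]{Willems1998} gives strict positivity $\Psi\ggB 0$ rather than only $\Psi\geB 0$. If it does not, I would recover it from statement a), applied to $\Psi'$ with $-\overset{\bullet}{\Psi'}\eB-\Psi_0\gB 0$.
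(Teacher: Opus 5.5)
Your proposal is correct and follows the same route as the paper, which attributes statement a) to \cite[Proposition~4.7]{Willems1998} and statement b) to \cite[Theorem~4.12]{Willems1998} with the choice of $\Psi_0$ in \eqref{eq:Psi0}, and gives no further argument. Your additions correctly supply what the paper leaves implicit in the phrase ``as a special case'': the check that $-\Psi_0 \gB 0$, the reduction of $\Psi$ modulo $\mathcal{B}$ combined with \eqref{eq:B_sp} to force $\deg \Psi = L-1$, and the fallback to statement a) to upgrade $\Psi \geB 0$ to $\Psi \ggB 0$.
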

	
	\subsection{A necessary and 
		sufficient LMI condition}
	\label{sec:proof_of_Lyap}
	The following lemma presents
	a matrix inequality that is sufficient for
	$-\overset{\bullet}{\Psi} \gB  0$
	and is necessary for 
	$\overset{\bullet}{\Psi} - \Psi_0 \eB 0$.
	\begin{lemma}
		\label{lem:Lyap_ineq} 
		Let $P$ be a real $q \times q$
		polynomial matrix  of degree $L \in \mathbb{N}$ given by \eqref{eq:poly_matrix_G_def}, and 
		define $\widetilde P \in \mathbb{R}^{q \times qL}$ by
		\begin{align*}
			\widetilde P &\coloneqq 
			\begin{bmatrix}
				P_0 & P_1 & \cdots & P_{L-1}
			\end{bmatrix}.
		\end{align*}
		Let
		$\Psi \in \Rsym$ with $\deg(\Psi) = L-1$, and 
		let 
		$\widetilde \Psi \in \mathbb{S}^{qL}$ be
		the block coefficient matrix of $\Psi$.
		Then the following statements hold for
		the system $\mathcal{B}$ defined 
		by \eqref{eq:behavior}:
		\begin{enumerate}
			\renewcommand{\labelenumi}{\textup{\alph{enumi})}}
			\item
			Define $J_{q(L-1)} \coloneqq \begin{bmatrix}
				0_{q(L-1) \times q}  & I_{q(L-1)}
			\end{bmatrix}$.
			If the matrices $\widetilde P$ and $\widetilde \Psi$ satisfy 
			\begin{equation}
				\label{eq:JRPsi_ineq1}
				\begin{bmatrix}
					-J_{q(L-1)} \vspace{3pt}\\ \widetilde P
				\end{bmatrix}^{\top} \widetilde \Psi +  \widetilde \Psi 
				\begin{bmatrix}
					-J_{q(L-1)} \vspace{3pt}\\ \widetilde P
				\end{bmatrix} > 0,
			\end{equation}
			then $-\overset{\bullet}{\Psi} \gB  0$.
			\item 
			Let $\Psi_0 \in \Rsym$ be defined as in
			\eqref{eq:Psi0}.
			If $\overset{\bullet}{\Psi} - \Psi_0 \eB 0$, then
			\eqref{eq:JRPsi_ineq1} holds.
		\end{enumerate}
	\end{lemma}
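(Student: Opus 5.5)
The plan is to reduce everything to a first-order state-space description of $\mathcal{B}$ and an ordinary Lyapunov computation. For $w \in \mathcal{B}$, define the stacked vector
\[
x(t) \coloneqq \begin{bmatrix} w(t) \\ w'(t) \\ \vdots \\ w^{(L-1)}(t)\end{bmatrix} \in \mathbb{R}^{qL}.
\]
Since $P(\frac{d}{dt})w=0$ and the leading coefficient of $P$ is $I_q$, we have $w^{(L)} = -\widetilde P x$. The remaining components of $x'$ are obtained by shifting, namely $J_{q(L-1)}x$. Hence $x' = A x$ with
\[
A \coloneqq \begin{bmatrix} J_{q(L-1)} \\ -\widetilde P\end{bmatrix} = -\begin{bmatrix} -J_{q(L-1)} \\ \widetilde P\end{bmatrix} \eqqcolon -\mathcal{A}.
\]
Because $\deg\Psi = L-1$, we also have $Q_\Psi(w)(t) = x(t)^{\top}\widetilde\Psi x(t)$.

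The key identity I would establish next is the standard QDF fact $\frac{d}{dt}Q_\Psi(w) = Q_{\overset{\bullet}{\Psi}}(w)$ for every $w \in \Cinf(\mathbb{R};\mathbb{R}^q)$. It follows by differentiating $\sum_{i,j} w^{(i)\top}\Psi_{i,j}w^{(j)}$ termwise and matching the result with the coefficients of $(\zeta+\eta)\Psi(\zeta,\eta)$. Applying this along $\mathcal{B}$ and using $x'=Ax$ gives
\[
Q_{\overset{\bullet}{\Psi}}(w)(t) = x(t)^{\top}(A^{\top}\widetilde\Psi + \widetilde\Psi A)x(t) = -x(t)^{\top}(\mathcal{A}^{\top}\widetilde\Psi+\widetilde\Psi\mathcal{A})x(t)
\]
for all $w\in\mathcal{B}$ and $t\in\mathbb{R}$. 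I expect this step to be the main point needing care. The polynomial $\overset{\bullet}{\Psi}$ has degree $L$ and involves $w^{(L)}$. The identity therefore holds only on $\mathcal{B}$, after substituting $w^{(L)}=-\widetilde P x$. The cleanest route is to differentiate $x^{\top}\widetilde\Psi x$ directly rather than to manipulate the two-variable polynomial coefficients.

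For a), set $S \coloneqq \mathcal{A}^{\top}\widetilde\Psi+\widetilde\Psi\mathcal{A}$, so that \eqref{eq:JRPsi_ineq1} reads $S>0$. Then $Q_{-\overset{\bullet}{\Psi}}(w)(t) = x(t)^{\top}Sx(t) \ge 0$ on $\mathcal{B}$, so $-\overset{\bullet}{\Psi} \geB 0$. If $Q_{-\overset{\bullet}{\Psi}}(w)=0$, then $x(t)=0$ for all $t$, so $w=0$. This gives $-\overset{\bullet}{\Psi}\gB 0$.

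For b), note that $Q_{\Psi_0}(w)(t) = -\|x(t)\|^2$. The hypothesis $\overset{\bullet}{\Psi}-\Psi_0\eB 0$ then yields
\[
x(t)^{\top}(S - I_{qL})x(t) = 0 \quad \text{for all } w\in\mathcal{B}.
\]
Evaluating at $t=0$ and using that $x(0)$ can be prescribed arbitrarily for some $w\in\mathcal{B}$ (as noted before \eqref{eq:B_nn}), the quadratic form of the symmetric matrix $S-I_{qL}$ vanishes identically. Hence $S=I_{qL}>0$, which is \eqref{eq:JRPsi_ineq1}.
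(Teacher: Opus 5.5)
Your proposal is correct and follows essentially the paper's route. The paper expands $Q_{\overset{\bullet}{\Psi}}(w)$ directly from the coefficients of $(\zeta+\eta)\Psi(\zeta,\eta)$ into $(x')^{\top}\widetilde\Psi x + x^{\top}\widetilde\Psi x'$, which is exactly your $\frac{d}{dt}\bigl(x^{\top}\widetilde\Psi x\bigr)$. It then substitutes $x' = \begin{bmatrix} J_{q(L-1)} \\ -\widetilde P\end{bmatrix}x$ on $\mathcal{B}$ and argues a) and b) as you do, including using arbitrary initial data $x(0)$ to force $S = I_{qL}$.
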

	\begin{proof}
		Let $w \in \mathcal{B}$. 
		Before proving
		statements~a) and b), we first obtain
		a representation of $Q_{\overset{\bullet}{\Psi}}(w)$
		in terms of the matrices 
		$\widetilde P$ and $\widetilde \Psi$.
		By the definition \eqref{eq:bullet_Phi}
		of $\overset{\bullet}{\Psi}$,
		we obtain 
		\begin{align*}
			Q_{\overset{\bullet}{\Psi}}(w)&= 
			\sum_{i,j =0}^{L-1} \big( (w^{(i+1)})^{\top}
			\Psi_{i,j} w^{(j)}  + 
			(w^{(i)})^{\top}
			\Psi_{i,j} w^{(j+1)} \big)  \\
			&=
			\begin{bmatrix}
				w' \\ \vdots \\ w^{(L)}
			\end{bmatrix}^{\top}  \widetilde \Psi
			\begin{bmatrix}
				w \\ \vdots \\ w^{(L-1)}
			\end{bmatrix}  +
			\begin{bmatrix}
				w \\ \vdots \\ w^{(L-1)}
			\end{bmatrix}^{\top}  \widetilde \Psi
			\begin{bmatrix}
				w' \\ \vdots \\ w^{(L)}
			\end{bmatrix}.
		\end{align*}
		Since
		\begin{align*}
			\begin{bmatrix}
				w' \\ \vdots \\ w^{(L)}
			\end{bmatrix}
			&=
			\begin{bmatrix}
				J_{q(L-1)} \vspace{3pt}\\ -\widetilde P
			\end{bmatrix}
			\begin{bmatrix}
				w \\ \vdots \\ w^{(L-1)}
			\end{bmatrix},
		\end{align*}
		it follows that
		\begin{align}
			\label{eq:Q_bPsi}
			Q_{\overset{\bullet}{\Psi}}(w) &=
			\begin{bmatrix}
				w \\ \vdots \\ w^{(L-1)}
			\end{bmatrix}^{\top}
			\left( 
			\begin{bmatrix}
				J_{q(L-1)} \vspace{3pt}\\ -\widetilde P
			\end{bmatrix}^{\top}
			\widetilde \Psi 
			+
			\widetilde \Psi \begin{bmatrix}
				J_{q(L-1)} \vspace{3pt}\\ -\widetilde P
			\end{bmatrix}
			\right)
			\begin{bmatrix}
				w \\ \vdots \\ w^{(L-1)}
			\end{bmatrix}.
		\end{align}
		
		a)
		Suppose that \eqref{eq:JRPsi_ineq1} holds. By
		\eqref{eq:Q_bPsi},
		$-Q_{\overset{\bullet}{\Psi}}(w) \geq 0$ for all
		$w \in \mathcal{B}$.
		Moreover, if 
		$w \in \mathcal{B}$ satisfies
		$Q_{\overset{\bullet}{\Psi}}(w) = 0$, then
		$w(t) = 0$ for all $t \in \mathbb{R}$.
		Therefore, $-\overset{\bullet}{\Psi} \gB 0$.
		
		b)
		Suppose that $\overset{\bullet}{\Psi} - \Psi_0 \eB 0$.
		Then \eqref{eq:Q_bPsi} yields
		\[
		\begin{bmatrix}
			w \\ \vdots \\ w^{(L-1)}
		\end{bmatrix}^{\top}
		\left( 
		\begin{bmatrix}
			J_{q(L-1)} \vspace{3pt}\\ -\widetilde P
		\end{bmatrix}^{\top}
		\widetilde \Psi 
		+
		\widetilde \Psi \begin{bmatrix}
			J_{q(L-1)} \vspace{3pt}\\ -\widetilde P
		\end{bmatrix} + I_{qL}
		\right)
		\begin{bmatrix}
			w \\ \vdots \\ w^{(L-1)}
		\end{bmatrix} = 0
		\]
		for all $w \in \mathcal{B}$.
		For any
		$w_0,\dots,w_{L-1} \in \mathbb{R}^q$,
		we can construct $w \in \mathcal{B}$ satisfying
		$w^{(\ell)}(0) = w_{\ell}$ for all $\ell=0,\dots,L-1$.
		It follows that 
		\[
		\begin{bmatrix}
			-J_{q(L-1)} \vspace{3pt}\\ \widetilde P
		\end{bmatrix}^{\top}
		\widetilde \Psi 
		+
		\widetilde \Psi \begin{bmatrix}
			-J_{q(L-1)} \vspace{3pt}\\ \widetilde P
		\end{bmatrix} = I_{qL}.
		\]
		Thus, \eqref{eq:JRPsi_ineq1} holds.
	\end{proof}
	
	We are now in a position to prove Lemma~\ref{lem:QMI}.
	\begin{proof}[Proof of Lemma~\ref{lem:QMI}]
		Define 
		the system $\mathcal{B}$ 
		by
		\eqref{eq:closed_loop_behavior}.
		First, suppose 
		that there exists $\widetilde\Psi \in \mathbb{S}^{qL}$ such that 
		$\widetilde\Psi > 0$ and 
		\begin{equation}
			\label{eq:LMI_widetildePsi}
			\begin{bmatrix}
				-J_{q(L-1)} \\ C \\ R
			\end{bmatrix}^{\top}
			\widetilde\Psi + \widetilde\Psi 
			\begin{bmatrix}
				-J_{q(L-1)} \\ C \\ R
			\end{bmatrix} > 0.
		\end{equation}
		Choose $\Psi \in \Rsym$ with $\deg \Psi = L-1$
		such that 
		its block coefficient matrix is $\widetilde \Psi$.
		Since $\widetilde \Psi >0$, it follows that 
		$\Psi \geB  0$; see 
		\eqref{eq:B_nn}. By \eqref{eq:LMI_widetildePsi} and 
		Lemma~\ref{lem:Lyap_ineq}.a), we also obtain
		$-\overset{\bullet}{\Psi} \gB  0$.
		From 
		Theorem~\ref{thm:QDF}.a), it follows that
		$\mathcal{B}$ is stable; i.e., $C$ stabilizes $R$.	
		
		Next, suppose that 
		$C$ stabilizes $R$.
		By Theorem~\ref{thm:QDF}.b),
		there exists $\Psi \in \Rsym$ with $\deg \Psi = L-1$ such that 
		$\Psi \ggB 0$ and
		$\overset{\bullet}{\Psi} - \Psi_0 \eB 0$.
		Then by \eqref{eq:B_sp},
		the block coefficient matrix $\widetilde \Psi \in \mathbb{S}^{qL}$ of $\Psi$ satisfies 
		$\widetilde \Psi >0$. 
		The LMI \eqref{eq:LMI_widetildePsi} follows from
		Lemma~\ref{lem:Lyap_ineq}.b).
	\end{proof}
	
	\section{Approximation-based characterization of informativity for system identification}
	\label{sec:approximation}
	In this section, we characterize informativity for system identification
	using B-spline approximations of $\Hs$-functions.
	Section~\ref{sec:splines} collects
	basic properties on B-splines. We then use them in
	Section~\ref{sec:approximation_data_operator}
	to construct matrix approximations 
	of the data-embedded operator $H$ defined by \eqref{eq:H_def}.
	In Section~\ref{sec:sec:relation_informavitity}, we show that 
	exact system
	identification can be achieved for some finite approximation order,
	without letting the order go to infinity.
	
	\subsection{Basic facts on B-splines}
	\label{sec:splines}
	Let $\tau >0$ and $N \in \mathbb{N}$. Set $h_N \coloneqq \tau  2^{-N}$.
	For $n=0,\dots,2^{N}-1$,
	the 
	B-spline $B_{N,n}^0\colon [0,\tau] \to \mathbb{R}$ 
	of degree $0$ is defined by
	\[
	B_{N,n}^0(t) \coloneqq 
	\begin{cases}
		1, & nh_N \leq t < (n+1)h_N, \\
		0, & \text{otherwise}.
	\end{cases}
	\]
	For  $d \in \mathbb{N}$ and $n=0,\dots,2^N-d-1$, 
	the B-spline
	$B_{N,n}^d \colon [0,\tau] \to \mathbb{R}$
	of degree $d$
	is 
	defined recursively by
	\[
	B_{N,n}^d (t)\coloneqq 
	\frac{t-nh_N }{dh_N } B_{N,n}^{d-1} (t) + 
	\frac{(n+d+1)h_N - t}{dh_N } B_{N,n+1}^{d-1}(t),
	\quad t \in [0,\tau].
	\]
	Let  $d \in \mathbb{N}_0$ and $n=0,\dots,2^N-d-1$.
	The support of $B_{N,n}^d$, i.e., 
	the closure of the set
	\[
	\{t \in [0,\tau]: B_{N,n}^d(t) \neq 0\},
	\] 
	is given by $[nh_N, (n+d+1)h_N]$. 
	On each subinterval 
	$[ih_N, (i+1)h_N)$ for $i=n,\dots,n+d$, 
	$B_{N,n}^d$
	is a polynomial of degree $d$.
	Furthermore, $B_{N,n}^1$ is continuous on $[0,\tau]$, and if  $d \geq 2$, then 
	$B_{N,n}^d$
	is $(d-1)$-times continuously differentiable on $[0,\tau]$.
	
	The linear span of a subset $M$ of 
	a vector space $V$ is denoted by $\mathspan M$.
	For $N,L \in \mathbb{N}$,
	we define the finite-dimensional space
	$\Bs[0,\tau] $ by
	\[
	\Bs[0,\tau] \coloneqq 
	\begin{cases}
		\{0 \}, & 2^N < L+1. \\
		\mathspan\{ 
		B_{N,0}^L, B_{N,1}^L,\dots, B_{N,2^N-L - 1}^L
		\}, & 2^N \geq L+1.
	\end{cases}
	\]
	Then we have 
	\begin{equation}
		\label{eq:monotone_prop}
		\Bs[0,\tau] \subseteq \mathrm{B}_{N+1}^L[0,\tau]
	\end{equation}
	for all $N \in \mathbb{N}$;
	see, e.g., \cite[p.~527]{Schumaker2007}.
	The following lemma is useful to approximate the operator $H$.
	
	\begin{lemma}
		\label{lem:denseness}
		For all $L \in \mathbb{N}$, 
		$\bigcup_{N \in \mathbb{N}} \Bs[0,\tau]$ is 
		dense in $\Hs[0,\tau]$.
	\end{lemma}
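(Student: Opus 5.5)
First, I would check that each $B^L_{N,n}$ actually lies in $\Hs[0,\tau]$. The spline $B^L_{N,n}$ is $(L-1)$-times continuously differentiable, and on each knot interval it is a polynomial of degree $L$. Hence $B^{L}_{N,n}\in\Hso[0,\tau]$, and its $L$-th derivative is piecewise constant, so in particular it lies in $\Ls[0,\tau]$. Its support is $[nh_N,(n+L+1)h_N]\subseteq[0,\tau]$ because $n\le 2^N-L-1$. Therefore all derivatives up to order $L-1$ vanish at $0$ and $\tau$, and $\Bs[0,\tau]\subseteq\Hs[0,\tau]$.

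For density I would work through the isometry $\phi\mapsto\phi^{(L)}$. By the definition of the inner product, this map sends $\Hs[0,\tau]$ isometrically onto $\mathcal{R}\coloneqq\{\phi^{(L)}:\phi\in\Hs[0,\tau]\}$. By Lemma~\ref{lem:orthogonal_comp} (together with the inclusion \eqref{eq:P_perp_inclusion} and its converse), $\mathcal{R}=(\Ps[0,\tau])^{\perp}$ in $\Ls[0,\tau]$. Density of $\bigcup_N\Bs[0,\tau]$ in $\Hs$ is therefore equivalent to the following statement: the set $\mathcal{S}_N\coloneqq\{\psi^{(L)}:\psi\in\Bs[0,\tau]\}$, united over $N$, is dense in $(\Ps[0,\tau])^\perp$.

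I would then identify $\mathcal{S}_N$ concretely. Let $E_N$ be the space of step functions that are constant on the dyadic intervals $[ih_N,(i+1)h_N)$. I claim $\mathcal{S}_N=E_N\cap(\Ps[0,\tau])^\perp$. The inclusion $\subseteq$ is clear from the first paragraph. For $\supseteq$, take $g\in E_N\cap(\Ps)^\perp$. The proof of Lemma~\ref{lem:orthogonal_comp} shows that $\psi\coloneqq\mathcal{J}^Lg\in\Hs[0,\tau]$. This $\psi$ is $C^{L-1}$, piecewise polynomial of degree $L$ on the dyadic grid, and compactly supported in $[0,\tau]$. By the standard characterization (Curry–Schoenberg; see \cite{Schumaker2007}), such a spline is a linear combination of $B^L_{N,0},\dots,B^L_{N,2^N-L-1}$. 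Alternatively, one can avoid citing this result by a dimension count. The space $E_N\cap(\Ps)^\perp$ has dimension $2^N-L$ as soon as the $L$ constraints are independent on $E_N$, which holds when $2^N\ge L$ because no nonzero polynomial of degree $<L$ is orthogonal to all of $E_N$. On the other hand, $\dim\Bs=2^N-L$ since the B-splines are linearly independent. Combined with injectivity of $\psi\mapsto\psi^{(L)}$ on $\Hs$, this gives equality.

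Finally, let $g\in(\Ps)^\perp$. Since $\bigcup_N E_N$ is dense in $\Ls[0,\tau]$, pick $g_N\in E_N$ with $g_N\to g$, and let $Q$ denote the orthogonal projection onto $\Ps[0,\tau]$. I will correct $g_N$ to $\tilde g_N\coloneqq g_N-\sum_{i}c_{N,i}e_{N,i}$, where $e_{N,1},\dots,e_{N,L}\in E_N$ are chosen so that their Gram pairings with a basis of $\Ps$ form an invertible matrix, uniformly in $N$. For instance, I can take $e_{N,i}=Q_N p_i$, the projection onto $E_N$ of a basis $p_i$ of $\Ps$; these converge to $p_i$, so the pairing matrix converges to the invertible Gram matrix. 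The coefficients $c_{N,i}$ solve a linear system whose right-hand side is $\langle g_N,p_j\rangle\to\langle g,p_j\rangle=0$, so $c_{N,i}\to0$ and $\tilde g_N\in E_N\cap(\Ps)^\perp$ with $\tilde g_N\to g$. Combined with the previous paragraph, this gives the density. The main obstacle is the identification step $\mathcal{S}_N=E_N\cap(\Ps)^\perp$; I would handle it by the dimension count rather than relying on the external B-spline representation theorem.
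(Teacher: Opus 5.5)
Your proof is correct, but it takes a genuinely different route from the paper. The paper proves $\Bs[0,\tau]\subseteq\Hs[0,\tau]$ as you do. It then gets density from two external facts. First, every $f\in\Hs[0,\tau]$ can be approximated by a $C^\infty$ function with compact support in $(0,\tau)$. Second, such a function can be approximated in the $\Hs$-norm by an element of some $\Bs[0,\tau]$, via spline approximation theory \cite[Corollary~6.26]{Schumaker2007}.

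You instead use the isometry $\phi\mapsto\phi^{(L)}$ onto $(\Ps[0,\tau])^\perp$, which reuses Lemma~\ref{lem:orthogonal_comp}. You identify the image of $\Bs[0,\tau]$ exactly as $E_N\cap(\Ps[0,\tau])^\perp$ by a dimension count. Density then reduces to two elementary facts: dyadic step functions are dense in $\Ls[0,\tau]$, and a finite-dimensional correction with vanishing coefficients restores orthogonality to $\Ps[0,\tau]$.

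The paper's argument is shorter, but it rests on the density of test functions in $\Hs[0,\tau]$. It also needs the spline approximant to lie in $\Bs[0,\tau]$ rather than in the larger spline space with boundary B-splines; this holds for compactly supported targets by locality, but is left implicit. Your route is essentially self-contained: it needs only linear independence of the B-splines. As a by-product, it gives an explicit description of $\{\psi^{(L)}:\psi\in\Bs[0,\tau]\}$.

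Two small points should be tightened.
\begin{enumerate}
\item Having support in $[0,\tau]$ does not by itself force the derivatives to vanish at $0$ and $\tau$ when $n=0$ or $n=2^N-L-1$. What you need, as in the paper, is that the zero extension of $B^L_{N,n}$ to $\mathbb{R}$ is $(L-1)$-times continuously differentiable.
\item The claim that no nonzero $p\in\Ps[0,\tau]$ is orthogonal to $E_N$ when $2^N\ge L$ needs its one-line proof. The function $t\mapsto\int_0^t p(s)\,ds$ is a polynomial of degree at most $L$. It vanishes at the $2^N+1\ge L+1$ grid points $ih_N$, so it is identically zero, hence $p=0$.
\end{enumerate}
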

	\begin{proof}
		First, we prove that $\Bs[0,\tau]
		\subseteq \Hs[0,\tau]$ for all $N,L \in \mathbb{N}$.
		Let $2^N \geq L+1$ and $n=0,\dots,2^N-L-1$.
		Since 
		the $L$-th derivative of $B_{N,n}^{L}$
		is constant on each subinterval $[ih_N,(i+1)h_N)$, it follows that 
		$B_{N,n}^{L} \in \Hso[0,\tau]$. 
		Moreover, if we extend 
		the domain of $B_{N,n}^{L}$ to $\mathbb{R}$ by setting
		$B_{N,n}^{L}(t) = 0$ for $t \in \mathbb{R} \setminus [0,\tau]$,
		then this extended function is also $(L-1)$-times continuously differentiable on $\mathbb{R}$.
		Therefore,
		$B_{N,n}^{L} \in \Hs[0,\tau]$.
		
		Let $\varepsilon >0$ and $f \in \Hs[0,\tau]$ be arbitrary.
		There exists an infinitely
		differentiable function $\phi$
		with compact support in $(0,\tau)$
		such that 
		$\|f - \phi \|_{\Hs} \leq \varepsilon$.
		Moreover,
		there exist $N \in \mathbb{N}$ and $\psi \in \Bs[0,\tau]$ such that $\|\phi - \psi \|_{\Hs} \leq \varepsilon$; see, e.g., 
		\cite[Corollary~6.26]{Schumaker2007}.
		Therefore, $\|f - \psi \|_{\Hs} \leq 2\varepsilon$.
		Since $\varepsilon >0$ is arbitrary, 
		we conclude that 
		$\bigcup_{N \in \mathbb{N}} \Bs[0,\tau]$ is 
		dense in $\Hs[0,\tau]$ for all $L \in \mathbb{N}$.
	\end{proof}
	
	\subsection{Approximation of data-embedded operators}
	\label{sec:approximation_data_operator}
	Let $N,L\in \mathbb{N}$ satisfy $2^N \geq L+1$.
	For $n=0,\dots,2^N-L-1$, let 
	$B_{N,n}^{L,k}\colon [0,\tau_k] \to \mathbb{R}$ 
	be the B-spline of order $L$ as
	defined in Section~\ref{sec:splines}.
	We define 
	the matrices
	$\widetilde{H}_{k,N} \in \mathbb{R}^{qL \times (2^N-L)}$
	and
	$\widetilde{H}_{N} \in \mathbb{R}^{qL \times K(2^N-L)}$
	by
	\begin{equation}
		\label{eq:tildeH_N}
		\widetilde{H}_{k,N} \coloneqq 
		\begin{bmatrix}
				H_k B_{N,0}^{L,k} & \cdots & H_kB_{N,2^N-L-1}^{L,k}
			\end{bmatrix} \quad 
		\text{and} \quad 
		\widetilde{H}_{N} \coloneqq 
		\begin{bmatrix}
				\widetilde{H}_{1,N} & \cdots & \widetilde{H}_{K,N}
			\end{bmatrix}.
	\end{equation}
	These matrices
	$\widetilde{H}_{k,N} $
	and
	$\widetilde{H}_{N}$ are 
	finite-dimensional approximations of
	the operators $H_k \in \mathcal{L}(\Hs[0,\tau],\mathbb{R}^{qL})$ 
	and 
	$H \in \mathcal{L}(\Ht,\mathbb{R}^{qL})$, respectively,
	defined in \eqref{eq:Hi_def} and \eqref{eq:H_def}.
	Similarly, 
	we define the matrices
	$\widetilde{Y}_{L,k,N}\in \mathbb{R}^{p \times (2^N-L)}$
	and
	$\widetilde{Y}_{L,N} \in \mathbb{R}^{p \times K(2^N-L)}$
	by
	\begin{equation}
		\label{eq:tildeY_N}
		\widetilde{Y}_{L,k,N} \coloneqq 
		\begin{bmatrix}
				Y_{L,k} B_{N,0}^{L,k} & \cdots & Y_{L,k}B_{N,2^N-L-1}^{L,k}
			\end{bmatrix} \quad 
		\text{and} \quad 
		\widetilde{Y}_{L,N} \coloneqq \begin{bmatrix}
				\widetilde{Y}_{L,1,N} & \cdots & \widetilde{Y}_{L,K,N}
			\end{bmatrix},
	\end{equation}
	which approximate
	the $L$-th synthesis operator $Y_{L,k}$
	associated with the $k$-th output $y_k$ and
	the operator $Y_L$ introduced in
	\eqref{eq:YL_def}, respectively.
	The following lemma shows that 
	the ranges of the operator $H$
	and the matrix $\widetilde{H}_{N}$
	coincide when $N$ is sufficiently large.
	\begin{lemma}
		\label{lem:range_H}
		For $N,L \in \mathbb{N}$ with $2^N \geq L+1$,
		define  $H \in \mathcal{L}(\Ht,\mathbb{R}^{qL})$
		and $\widetilde{H}_{N} \in \mathbb{R}^{qL \times K(2^N-L)}$ by
		\eqref{eq:H_def} and
		\eqref{eq:tildeH_N}, respectively. Then
		for all $L\in \mathbb{N}$,
		there exists $N \in \mathbb{N}$ such that 
		$\Ran H = \Ran \widetilde{H}_{N}$.
	\end{lemma}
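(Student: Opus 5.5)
The plan is to work inside the finite-dimensional space $\mathbb{R}^{qL}$, combining the nesting property \eqref{eq:monotone_prop} with the density result of Lemma~\ref{lem:denseness}.

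\textbf{Step 1: interpret the columns of $\widetilde{H}_N$.} The columns of $\widetilde{H}_N$ are exactly the vectors $H\phi$, where $\phi\in\Ht$ has a single nonzero component equal to a B-spline $B_{N,n}^{L,k}$. These $\phi$ lie in $\Ht$ because $\Bs[0,\tau_k]\subseteq\Hs[0,\tau_k]$, which was shown in the proof of Lemma~\ref{lem:denseness}. Hence
\[
\Ran \widetilde{H}_N = H\bigl(\mathcal{S}_N\bigr),\qquad \mathcal{S}_N\coloneqq\{(\phi_k)_{k=1}^K:\phi_k\in \Bs[0,\tau_k]\},
\]
and in particular $\Ran\widetilde{H}_N\subseteq\Ran H$ for every $N$ with $2^N\ge L+1$.

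\textbf{Step 2: monotonicity.} By \eqref{eq:monotone_prop}, $\mathcal{S}_N\subseteq\mathcal{S}_{N+1}$. Therefore the subspaces $\Ran\widetilde{H}_N$ form an increasing chain in $\mathbb{R}^{qL}$. Their dimensions are bounded by $qL$, so the chain stabilizes: there exist $N_0$ and a subspace $V$ with $\Ran\widetilde{H}_N=V$ for all $N\ge N_0$. It remains to show that $V=\Ran H$.

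\textbf{Step 3: density (the key step).} Let $\phi\in\Ht$. By Lemma~\ref{lem:denseness}, applied componentwise on each $[0,\tau_k]$ together with the nesting, we can choose $\psi^{(j)}\in\mathcal{S}_{N_j}$ with $N_j\ge N_0$ and $\psi^{(j)}\to\phi$ in $\Ht$. Since $H$ is bounded, $H\psi^{(j)}\to H\phi$. Each $H\psi^{(j)}$ lies in $V$, and $V$ is a finite-dimensional, hence closed, subspace. Consequently $H\phi\in V$, so $\Ran H\subseteq V$.

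Combining this with Step 1 gives $\Ran H=\Ran\widetilde{H}_N$ for every $N\ge N_0$. Boundedness of $H$ follows from Definition~\ref{def:synthesis}, since each $W_{\ell,k}$ is bounded on $\Hs$.

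I expect the only delicate point to be the componentwise use of Lemma~\ref{lem:denseness} with a common index $N$ across all the intervals $[0,\tau_k]$. This is handled by taking the maximum of the indices obtained for the individual components and using \eqref{eq:monotone_prop} to lift the lower-index approximations to that common level.
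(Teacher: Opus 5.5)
Your proof is correct, but it takes a different route from the paper's. The paper argues trajectory by trajectory. For each $k$ it fixes an orthonormal basis $e_1,\dots,e_{i_0}$ of $\Ran H_k$ and picks preimages $\phi_i\in\Hs[0,\tau_k]$. It approximates these by $\psi_i\in\mathrm{B}^L_{N_k}[0,\tau_k]$ with $\|\phi_i-\psi_i\|_{\Hs}<1/(\sqrt{i_0}\|H_k\|)$. A perturbation fact (vectors within $1/\sqrt{i_0}$ of an orthonormal basis again form a basis) then gives $\Ran H_k=\Ran H_k|_{\mathrm{B}^L_{N_k}}$. Finally it sets $N=\max_k N_k$, applies \eqref{eq:monotone_prop}, and sums the ranges over $k$.

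You replace that quantitative step with two soft facts. First, an increasing chain of subspaces of $\mathbb{R}^{qL}$ stabilizes. Second, the finite-dimensional limit $V$ is closed, so continuity of $H$ and density of $\bigcup_N\mathcal{S}_N$ in $\Ht$ give $\Ran H\subseteq V$. This is shorter, handles all $K$ trajectories at once, and needs no basis or norm bookkeeping. It also explicitly gives $\Ran H=\Ran\widetilde{H}_N$ for every $N\ge N_0$. By the nesting, every $\Ran\widetilde{H}_N$ already lies in $V$, so the requirement $N_j\ge N_0$ in your Step~3 is not actually needed.

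The paper's argument is more constructive in return. It ties the admissible $N$ to an explicit approximation tolerance for finitely many preimages, which could in principle be combined with B-spline error estimates to bound $N$. Your stabilization argument is purely existential.
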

	\begin{proof}
		Let $L\in \mathbb{N}$ and 
		let $H_k|_{\Bs}$ denote 
		the restriction of $H_k$ to $\Bs[0,\tau_k]$
		for $k=1,\dots,K$.
		First, we prove that 
		for each $k=1,\dots,K$,
		there exists $N_k \in \mathbb{N}$ such that 
		\begin{equation}
				\label{eq:RanHk_app}
				\Ran H_k = \Ran H_k|_{\mathrm{B}_{N_k}^L}.
			\end{equation}
		Fix $k \in \{1,\dots,K\}$.
		To prove the inclusion $\Ran H_k \subseteq \Ran H_k|_{\mathrm{B}_{N_k}^L}$, 
		let $\{ e_i:i=1,\dots,i_0\}$ be an
		orthonormal basis of $\Ran H_k$.
		For each $i=1,\dots,i_0$,
		there exists $\phi_i \in \Hs [0,\tau_k]$ such that
		$e_i= H_k \phi_i$.
		By the density of
		$\bigcup_{N \in \mathbb{N}} \Bs[0,\tau_k]$ in $\Hs[0,\tau_k]$  and the monotone property~\eqref{eq:monotone_prop},
		there exists $N_k \in \mathbb{N}$ such that, 
		for every $i=1,\dots,i_0$,
		$\mathrm{B}_{N_k}^L$
		contains an element $\psi_i$ satisfying
		\[
		\|\phi_i - \psi_i\|_{\Hs } < \frac{1}{\sqrt{i_0} \|H_k\|}.
		\]
		Then
		$\|e_i - H_k \psi_i \| < 1/\sqrt{i_0}$ for all $i=1,\dots,i_0$.
		This implies that 
		$\{ H_k \psi_i:i=1,\dots,i_0\}$ is also a basis of $\Ran H_k$;
		see, e.g., \cite[Exercise~6.B.6]{Axler2024}.
		Hence,
		the inclusion $\Ran H_k \subseteq \Ran H_k|_{\mathrm{B}_{N_k}^L}$ holds.
		Since the converse inclusion  is trivial, 
		it follows that \eqref{eq:RanHk_app} holds.
		
		Define $N \coloneqq \max\{N_1,N_2,\dots,N_K\}$.
		By construction, we  obtain
		\[\Ran H_k|_{\Bs} = \Ran \widetilde{H}_{k,N}
		\]
		for all $k=1,\dots,K$.
		Since the monotone property~\eqref{eq:monotone_prop}
		yields $
		\mathrm{B}_{N_k}^L[0,\tau_k] \subseteq
		\Bs[0,\tau_k]$ for all $k=1,\dots,K$,
		we deduce from \eqref{eq:RanHk_app} that
		\[
		\Ran H_k = \Ran H_k|_{\Bs} =  \Ran \widetilde{H}_{k,N}
		\]
		for all $k=1,\dots, K$. Thus,
		\[
		\Ran H = \left\{
		\sum_{k=1}^{K} b_k : b_k \in \Ran \widetilde{H}_{k,N},~
		k=1,\dots,K
		\right\} = \Ran \widetilde{H}_{N}
		\]
		is obtained.
	\end{proof}

	\subsection{Relation between informativity for 
			system identification and
			surjectivity of $\widetilde{H}_{N}$}
	\label{sec:sec:relation_informavitity}
	Finally, we relate the surjectivity of 
	the matrix $\widetilde{H}_N$ to
	informativity 
	for system identification.
	Intuitively, the identification error of 
	the system constructed from the approximation matrices
	$\widetilde{H}_N$ and  $\widetilde{Y}_{L,N}$
	tends to zero as $N \to \infty$. 
	However, the following proposition shows
	a stronger result that
	the error vanishes for some 
	finite $N \in \mathbb{N}$.
	\begin{proposition}
		\label{prop:identification_app}
		Assume that for the data $\mathfrak{D} = (u_k,y_k)_{k=1}^{K}$,
		there exists a system 
		$R_s \in \mathbb{R}^{p\times qL}$  such that 
		\eqref{eq:system_k_noise_free} with $R=R_s$ holds.
		Define the matrices
		$\widetilde{H}_{N},\widetilde{Y}_{L,N}$
		by 
		\eqref{eq:tildeH_N} and 
		\eqref{eq:tildeY_N}, respectively.
		Then
		the following statements are equivalent:
		\begin{enumerate}
				\renewcommand{\labelenumi}{\textup{(\roman{enumi})}}
				\item The data $(u_k,y_k)_{k=1}^{K}$
				are informative for system identification.
				\item There exists $N \in \mathbb{N}$
				such that $\widetilde{H}_N$ is surjective. 
			\end{enumerate}
		Moreover, if 
		statement~\textup{(ii)} holds, then
		$R_s = -\widetilde{Y}_{L,N} \widetilde{S}$
		for any right inverse $\widetilde{S}
		\in \mathbb{R}^{K(2^N-L)\times qL}$
		of $\widetilde{H}_N$.
	\end{proposition}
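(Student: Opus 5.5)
The plan is to reduce everything to Proposition~\ref{prop:identification} and Lemma~\ref{lem:range_H}. By Proposition~\ref{prop:identification}, statement~(i) is equivalent to surjectivity of $H$, i.e., $\Ran H = \mathbb{R}^{qL}$. So it suffices to show that $H$ is surjective if and only if $\widetilde{H}_N$ is surjective for some $N$.

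For (i) $\Rightarrow$ (ii), assume $H$ is surjective. Lemma~\ref{lem:range_H} gives $N$ with $2^N \geq L+1$ and $\Ran \widetilde{H}_N = \Ran H = \mathbb{R}^{qL}$, so $\widetilde{H}_N$ is surjective.

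For (ii) $\Rightarrow$ (i), assume $\widetilde{H}_N$ is surjective; the case $2^N<L+1$ is excluded because the matrix is then undefined or empty. The first part of the proof of Lemma~\ref{lem:denseness} shows $B^{L,k}_{N,n}\in \Hs[0,\tau_k]$. Hence each column of $\widetilde{H}_N$ equals $H\phi$, where $\phi\in\Ht$ has $k$-th component $B^{L,k}_{N,n}$ and all other components zero. Therefore $\Ran\widetilde{H}_N\subseteq \Ran H$, so $H$ is surjective, and Proposition~\ref{prop:identification} gives (i).

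For the identification formula, let $\widetilde{S}$ be a right inverse of $\widetilde{H}_N$. Lemma~\ref{lem:data_consistency_cond} with zero noise gives $R_sH+Y_L=0$ on $\Ht$. Apply this to the elements $\phi$ above: the column of $R_s\widetilde{H}_N$ indexed by $(k,n)$ is $R_sH_kB^{L,k}_{N,n}$. Since $\phi$ has only one nonzero component, this equals $-Y_{L,k}B^{L,k}_{N,n}$, which is the corresponding column of $-\widetilde{Y}_{L,N}$. Hence $R_s\widetilde{H}_N=-\widetilde{Y}_{L,N}$. Multiplying on the right by $\widetilde{S}$ and using $\widetilde{H}_N\widetilde{S}=I_{qL}$ yields $R_s=-\widetilde{Y}_{L,N}\widetilde{S}$.

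There is no serious obstacle here, since Lemma~\ref{lem:range_H} does the analytic work. The only point needing care is the bookkeeping in the last step. One has to check that the block structure of $\widetilde{H}_N$ and $\widetilde{Y}_{L,N}$ matches the operators $H$ and $Y_L$ applied to these single-component test functions.
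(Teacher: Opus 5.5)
Your proof is correct and follows the paper's argument: the equivalence is reduced to surjectivity of $H$ via Proposition~\ref{prop:identification} and Lemma~\ref{lem:range_H}, and your explicit inclusion $\Ran\widetilde{H}_N\subseteq\Ran H$ for arbitrary $N$ just spells out what the paper leaves implicit for (ii)$\Rightarrow$(i). For the formula, the paper lifts $\widetilde{S}$ to a right inverse $S$ of $H$ restricted to the spline subspace and applies $R_sH+Y_L=0$ there; this is the same computation as your column-wise identity $R_s\widetilde{H}_N=-\widetilde{Y}_{L,N}$ followed by right multiplication by $\widetilde{S}$.
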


	\begin{proof}
		By Proposition~\ref{prop:identification},
		the data are informative for system identification if and only if
		the data-embedded opeator $H$ defined by
		\eqref{eq:H_def} is surjective.
		From this and 
		Lemma~\ref{lem:range_H},
		the equivalence (i) $\Leftrightarrow$ (ii)
		follows.
		
		Next, suppose that statement~(ii) holds.
		We  denote by $H|_{\Bs}$ 
		and $Y_L|_{\Bs}$
		the restrictions of $H$ and $Y_L$ to
		the orthogonal direct sum
		of $(\Bs[0,\tau_k])_{k=1}^K$,
		respectively.
		We partition a right inverse 
		$\widetilde{S}$ 
		of  $\widetilde{H}_N$ as 
		\[
		\widetilde{S}
		=
		\begin{bmatrix}
				\widetilde{S}_1 \\ \vdots \\ \widetilde{S}_{K}
			\end{bmatrix},
		\]
		where 
		$\widetilde{S}_1,\dots,\widetilde{S}_{K} \in \mathbb{R}^{(2^N-L) \times qL}$.
		Define 
		\[
		S b \coloneqq 
		\begin{bmatrix}
				\sum_{\ell = 1}^{2^N-L}
				(
				\widetilde{S}_1 b
				)_{\ell}  B_{N,\ell-1}^{L,1}  \\
				\vdots \\
				\sum_{\ell = 1}^{2^N-L}
				(
				\widetilde{S}_{K} b
				)_{\ell}  B_{N,\ell-1}^{L,K} 
			\end{bmatrix},\quad 
		b  \in \mathbb{R}^{qL},
		\]
		where $(
		\widetilde{S}_k b
		)_{\ell}$ is the $\ell$-th element of 
		$\widetilde{S}_k b\in \mathbb{R}^{2^N-L}$ for each
		$k=1,\dots,K$.
		Then $S$ is a 
		right inverse of $H|_{\Bs}$. Indeed,
		\begin{align}
				\label{eq:right_inverse}
				H|_{\Bs} S b =
				\sum_{k=1}^{K} 
				\sum_{\ell = 1}^{2^N-L}
				(
				\widetilde{S}_k b
				)_{\ell}  H_k B_{N,\ell-1}^{L,k} 
				=
				\sum_{k=1}^{K} 
				\widetilde{H}_{k,N} \widetilde{S}_k b =
				\widetilde{H}_{N} Sb
				= b
			\end{align}
		for all $b \in \mathbb{R}^{qL}$.
		Since $R_sH|_{\Bs} +Y_L|_{\Bs} = 0$ by Lemma~\ref{lem:data_consistency_cond},
		a calculation similar to \eqref{eq:right_inverse}
		shows that 
		\begin{align*}
				R_sb = -Y_L|_{\Bs}S b =
				- \sum_{k=1}^{K}
				\sum_{\ell = 1}^{2^N-L}
				(\widetilde{S}_k b)_{\ell} Y_{L,k} B_{N,\ell-1}^{L,k} 
				= 	- \sum_{k=1}^{K}
				\widetilde{Y}_{L,k,N}\widetilde{S}_k b 
				=
				- \widetilde{Y}_{L,N} Sb
			\end{align*}
		for all $b \in \mathbb{R}^{qL}$.
	\end{proof}
\printbibliography
	\end{document}